\newtheorem{theorem}{Theorem}[section]
\newtheorem{lemma}[theorem]{Lemma}
\newtheorem{proposition}[theorem]{Proposition}
\newtheorem{corollary}[theorem]{Corollary}
\def \Cm {\mathbb{C}}
\def \Rm {\mathbb{R}}
\def \Sm {\mathbb{S}}
\def \Zm {\mathbb{Z}}
\def\C{\mathcal{C}}
\def\D{\mathcal{D}}
\def\K{\mathcal{K}}
\def\L{\mathcal{L}}
\def\diam{ {{\rm diam}}}
\newcommand{\tf}{\tilde{f}}
\newcommand{\wtW}{ {\widetilde W} }
\newcommand{\where}{\quad\text{ where }}
\newcommand{\qandq}{\quad\text{ and }\quad}
\newcommand{\cout}[1]{}
\newcommand{\x}{{\mathrm x}}
\newcommand{\y}{{\mathrm y}}
\newcommand{\tu}{{\tilde{u}}}
\newcommand{\dprod}[2]{\langle #1, #2 \rangle}
\newcommand{\sgn}[1]{\text{ sgn}(#1)}
\title{On reconstruction formulas for the ray transform acting on symmetric differentials on surfaces}
\author{Fran\c cois Monard\thanks{Department of Mathematics, University of Washington. email: fmonard@uw.edu}}
\begin{document}
\maketitle

\begin{abstract}
    The present article proposes a partial answer to the explicit inversion of the tensor tomography problem in two dimensions, by proving injectivity over certain kinds of tensors and providing reconstruction formulas for them. These tensors are symmetric differentials of any order as well as other types obtained after taking their transverse covariant derivative once. Such reconstruction formulas require introducing additional types of fiberwise Hilbert transforms which satisfy a convenient generalization of the Pestov-Uhlmann commutator formula \cite{Pestov2005}. 

    Numerical simulations in {\tt MatLab} are provided using the author's code presented in \cite{Monard2013}, justifying the exactness of the formulas in some cases of simple and near-simple metrics, and displaying issues encountered as one increases either curvature, lack of simplicity, or tensor order, in all of which cases the formulas derived in the first part become theoretically insufficient. 
\end{abstract}


\section{Introduction}

Integral transforms arise in many areas of imaging sciences, including medical and seismic imaging. The most famous example is the two-dimensional Radon transform of a function, with applications in X-ray computerized tomography:
\begin{align*}
    Rf(s,\theta) = \int_{L(s,\theta)} f\ d\ell, \quad (s,\theta)\in \Rm\times \Sm^1,
\end{align*}
where $L(s,\theta) := \{ (x_1,x_2) \in \Rm^2: x_1\cos\theta + x_2\sin\theta = s \}$. Such integral transforms can be generalized to $(M,g)$ a non-trapping Riemannian surface with boundary, where the curves of integrations are the geodesics for the metric $g$, and to more general integrands expressed here as smooth functions defined on the unit circle bundle of $M$ (denoted $SM$), giving rise to the following geodesic X-ray transform
\begin{align}
    If(x,v) = \int_0^{\tau(x,v)} f(\varphi_t(x,v))\ dt, \quad (x,v)\in \partial_+ SM,
    \label{eq:geoRT}
\end{align}
where $\varphi_t(x,v) = (\gamma_{x,v}(t), \dot\gamma_{x,v}(t))$ denotes the geodesic flow, $\tau(x,v)$ is the first time of exit of the geodesic $\gamma_{x,v}$ and $\partial_+ SM:= \{(x,v): x\in \partial M, |v|=1, v\cdot\nu_x >0 \}$ is the influx boundary of $SM$ ($\nu_x$ is the unit-inner normal to $\partial M$ at $x$). Such a setting covers the case of the X-ray transforms of symmetric covariant $k$-tensors for any integer $k\ge 0$, in which case the integrand in \eqref{eq:geoRT} takes the form
\begin{align*}
    f(\varphi_t(x,v)) = f(\gamma_{x,v}(t), \dot\gamma_{x,v}(t)^{\otimes k}).
\end{align*}
Given $(M,g)$ a non-trapping surface with boundary and its ray transform defined on $\partial_+ SM$, questions of theoretical interest are injectivity (or characterization of the lack thereof), stability, range characterization and reconstruction algorithms. 

The case of functions ($k=0$) has applications in Computerized Tomography and injectivity was first solved in \cite{Mukhometov1975} for simple metrics. Integral transforms of vector fields ($k=1$) arise in Doppler ultrasound tomography, and injectivity over solenoidal vector fields was first established in \cite{Anikonov1997} in the simple case. Pestov and Uhlmann then derived in \cite{Pestov2004} Fredholm equations leading to a reconstruction procedure up to smooth error for both cases $k=0,1$. These formulas become exact when curvature is constant, and can be made exact when curvature is close enough to constant \cite{Krishnan2010}. The case $k=2$ arises in the deformation (or linearized) boundary rigidity problem, which involves the inversion of the ray transform of a second-order tensor (a perturbation of a known background metric tensor). This case is treated in \cite{Sharafudtinov2007} via variations of Dirichlet-to-Neumann maps. The case of fourth-order tensors ($k=4$) finds applications in travel time tomography in slightly anisotropic elastic media \cite[Ch. 7]{Sharafudtinov1994}, where the complex amplitude of compressional waves gives, to first approximation, the ray transform of a symmetric $4$-tensor. Microlocal techniques in \cite{Stefanov2004} show in general that the normal operator $N = I^\star I$ is a smoothing pseudo-differential operator of order $-1$, and the authors give there a microlocal inversion over tensors of arbitrary order for simple metrics, thereby giving a microlocal procedure to resolve singularities. While the work previously described establishes a stability result, the question of solenoidal injectivity for tensors of any order on simple surfaces was recently established in \cite{Paternain2011a}, and a range characterization was given by the same authors in \cite{Paternain2013a}, proving in passing more general results for attenuated ray transforms. The interested reader is also invited to read the review \cite{Paternain2013} for more general settings and open problems.


\medskip
In the present article, we consider inversion formulas for the X-ray transform over elements of the form (i) $f$ and (ii) $X_\perp h$ ($X_\perp$ denotes the transverse geodesic flow, defined in Sec. \ref{sec:SM}), where $f,h$ are smooth functions on $SM$ belonging to the space
\begin{align*}
    \Omega_k := \ker (\partial_\theta - ik Id) \cap \C^\infty(SM), 
\end{align*}
for some fixed $k\in \Zm$ ($\partial_\theta$ denotes the so-called {\em vertical derivative}, defined in Sec. \ref{sec:SM}), and $h$ vanishes at the boundary $\partial M$. The case $k=0$ corresponds to functions and solenoidal vector fields treated in \cite{Pestov2004}. The integrands of the kind (i) mentioned above are particular kinds of symmetric tensors in a basis of holomorphic coordinates $z = x + i y$ for the underlying Riemann surface (see \cite{Farkas1992}): an element of $\Omega_k$ can be obtained by considering tensors of the form $g(x,y)\ \sigma (dz^{\otimes p} \otimes d\bar{z}^{\otimes q})$ with $p-q = k$ for some function $g$ defined on $M$ and where $\sigma$ denotes symmetrization. When $p$ or $q$ equals zero, such tensors are symmetric differentials. On the other hand, elements of the kind (ii) arise naturally by duality of the first problem, in a similar way to \cite{Pestov2004} when $k=0$. For both integrands considered, the recent $s$-injectivity result in \cite{Paternain2011a} allows us to first prove that the problem considered here is, in fact, (completely) injective regardless of whether these integrands are solenoidal, which not all of them are. This injectivity is to be expected as, unlike full symmetric $m$-tensor fields, these elements are only spanned by two dimensions of data, which makes the problem considered formally determined. 


Reconstruction formulas for solenoidal tensors of order $k>2$ in the non-Euclidean setting are still under active study (in the Euclidean case, Sharafutdinov settles the question in any dimension $n\ge 2$ in \cite[Theorem 2.12.2]{Sharafudtinov1994}). In that regard and since any smooth symmetric tensor decomposes orthogonally into $\bigoplus_{l\in \Zm} \Omega_l$, inversion of the ray transform over $\Omega_k$ can be seen as a building block toward such reconstruction formulas. The range characterization of the ray transform over $\Omega_k$ was recently used in \cite{Paternain2013a} as a tool for characterizing the range of the ray transform over symmetric $k$-tensors. 


Inverting $If$ for $f\in \Omega_k$ can also be recast as inverting the attenuated ray transform of a function for a particular unitary connection, a purely imaginary ``attenuation'' term. In the case of a real-valued attenuation, reconstruction algorithms for such transforms were provided in \cite{Salo2011}. A novelty here is that the inversion in this case does not use holomorphic integrating factors. The main idea here consists in conjugating the Hilbert transform (by appropriate powers of a non-vanishing abelian differential) rather than conjugating the geodesic flow, and using some ideas of \cite{Pestov2004} to derive Fredholm equations for the unknowns. These equations in turn allow the reconstruction of $k$-differentials and their transverse derivative up to a smooth, explicitely given error in the case of {\em simple} metrics. In addition, we prove that said error operators become contractions when the curvature and its gradient are small enough in uniform norm, so that the Fredholm equations become exact reconstruction formulas via Neumann series, generalizing a prior result by Krishnan \cite{Krishnan2010} in the case $k=0$. Such conditions on curvature become more stringent as the tensor order $k$ increases. 
Finally, we present a numerical implementation of the reconstruction algorithms derived, using a {\tt MatLab} code previously developed by the author and presented in \cite{Monard2013} in the context of inverting the ray transform over functions and solenoidal vector fields. We illustrate how the reconstruction algorithms proposed behave depending on curvature, the order of the differential, as well as the simplicity of the metric. 

\paragraph{Outline.} The rest of the paper is structured as follows. Section \ref{sec:SM} recalls important facts about the unit circle bundle, after which it is proved in Section \ref{sec:injectivity} that both problems considered are injective. In Section \ref{sec:Hk} we introduce generalizations of the fiberwise Hilbert transform, obtained by conjugating it with powers of a non-vanishing abelian differential, and we generalize the Pestov-Uhlmann commutator formula to these operators (Lemma \ref{lem:commutatork}). Section \ref{sec:Fredholm} provides Fredholm equations (Thm \ref{thm:main}) which in turn lead to exact reconstruction procedures under smallness assumptions on the curvature (Prop. \ref{prop:Wkest}, Cor. \ref{cor:contraction}). Section \ref{sec:numerics} covers numerical examples. 

\section{The geometry of the unit circle bundle} \label{sec:SM}

Let us denote the geodesic flow $X = \frac{\partial}{\partial t}\varphi_t|_{t=0}$ on $SM$. Assuming to work on a simply connected surface, there exists a global chart of isothermal coordinates (see \cite{Ahlfors1966}) $\x = (x,y)$ with frame $(\partial_x,\partial_y)$. At each point, the tangent unit circle is parameterized by an angle $\theta$ denoting the angle of the tangent vector w.r.t. to a fixed vector, e.g. $\partial_x$. In these coordinates the metric is isotropic and reads $ds^2 = g(x,y) (dx^2  + dy^2) = e^{2\lambda(x,y)} (dx^2  + dy^2)$. We will denote by $\gamma_{\x,\theta}(t)$ the unique unit-speed geodesic with initial conditions 
\begin{align*}
    \gamma_{\x,\theta}(0) = \x \qandq \dot\gamma_{\x,\theta}(0) = e^{-\lambda(\x)} \binom{\cos\theta}{\sin\theta}, \qquad (\x,\theta) \in M\times\Sm^1.     
\end{align*}
Defining $\tau(\x,\theta)$ as the first exit time of the geodesic $\gamma_{\x,\theta}$, we obtain a geodesic mapping defined on the set  
\begin{align}
    \D := \{ (\x,\theta,t) : (\x,\theta)\in SM,\ -\tau(\x,\theta+\pi) < t < \tau(\x,\theta) \}.
    \label{eq:D}
\end{align}

There exists a circle action on the unit tangent bundle $SM$, whose infinitesimal generator, also called {\em vertical vector field}, is given by $V \equiv\frac{\partial}{\partial\theta}$. From $X,V$ one may construct a global frame of $T(SM)$ by constructing the vector field $X_\perp := [X,V]$, where $[\cdot,\cdot]$ stands for the Lie bracket, or commutator, of two vector fields. One also has the additional structure equations $[V,X_\perp] = X$ and $[X,X_\perp] = \kappa V$, with $\kappa$ the Gaussian curvature. In isothermal coordinates $(x, y, \theta)$, these vector fields read
\begin{align}
    \begin{split}
	X &= e^{-\lambda} \left( \cos\theta \partial_x + \sin\theta \partial_y + (-\sin\theta \partial_x\lambda + \cos\theta\partial_y\lambda)\ \partial_\theta \right), \\
        X_\perp &= - e^{-\lambda} \left( -\sin\theta\partial_x + \cos\theta\partial_y - (\cos\theta\partial_x\lambda + \sin\theta\partial_y\lambda)\ \partial_\theta \right).	
    \end{split}
    \label{eq:XXperpiso}  
\end{align}
We can then define a Riemannian metric on $SM$ by declaring $(X,X_\perp,V)$ to be an orthonormal basis and the volume form of this metric will be denoted by $d\Sigma^3$ (in isothermal coordinates, this form becomes $e^{2\lambda}\ dx\ dy\ d\theta$). The fact that $(X,X_\perp,V)$ are orthonormal together with the structure equations implies that the Lie derivative of $d\Sigma^3$ along the three vector fields vanishes, therefore these vector fields are volume preserving. Introducing the inner product 
\begin{align*}
    (u,v) = \int_{SM} u\bar{v}\ d\Sigma^3, \qquad u,v:SM\to \Cm,    
\end{align*}
with the bar denoting conjugation, the space $L^2(SM,\Cm)$ decomposes orthogonally as a direct sum
\begin{align}
    L^2(SM, \Cm) = \bigoplus_{k\in \Zm} H_k,
    \label{eq:L2decomp}
\end{align}
where $H_k$ is the eigenspace of $-iV$ corresponding to the eigenvalue $k$. As in the introduction, we also denote $\Omega_k := \C^\infty(SM) \cap H_k$. A smooth function $u:SM\to \Cm$ has a Fourier series expansion
\begin{align*}
    u = \sum_{k=-\infty}^{\infty} u_k(\x,\theta), \where \qquad u_k(\x,\theta) = e^{ik\theta} \tu_k (\x), \qquad \tu_k(\x) = \frac{1}{2\pi} \int_{\Sm^1} u(\x,\theta) e^{-ik\theta}\ d\theta.
\end{align*}
Such functions admit an even/odd decomposition w.r.t. to the involution $\theta\mapsto \theta+\pi$, denoted
\begin{align}
    u = u_+ + u_-, \where\quad  u_+ := \sum_{k \text{ even}} u_k \qandq u_- := \sum_{k \text{ odd}} u_k.
    \label{eq:oddeven}
\end{align}
An important decomposition of $X$ and $X_\perp$ due to Guillemin and Kazhdan (see \cite{Guillemin1980}) is given by defining $\eta_{\pm}:= \frac{X\pm iX_\perp}{2}$, so that one has the following decomposition 
\[ X = \eta_+ + \eta_- \qandq X_\perp = \frac{1}{i} (\eta_+ - \eta_-), \]
with the important property that $\eta_{\pm}:\Omega_k\to \Omega_{k\pm 1}$ for any $k\in \Zm$. Moreover, it should be noted that the operators $\eta_{\pm}$ are {\em elliptic} and that, since $M$ is simply connected, both problems
\begin{align*}
    \left(\eta_+ u = 0\quad (SM), \quad u|_{\partial M} = 0\right) \qandq \left(\eta_- u = 0\quad (SM), \quad u|_{\partial M} = 0\right),
\end{align*}
only admit the trivial solution $u\equiv 0$. Using the decomposition above, it is clear that both $X$ and $X_\perp$ map odd functions on $SM$ into even ones and vice-versa. 

\section{Injectivity of the ray transform over $\Omega_k$ and $X_\perp \Omega_k$} \label{sec:injectivity}

Recall the recent result by Paternain-Salo-Uhlmann on s-injectivity of the ray transform over tensors of any order:
\begin{theorem}\label{thm:PSU}\cite[Theorem 1.1]{Paternain2011a} Let $(M,g)$ be a simple 2D manifold and let $m\ge 0$. If $f$ is a smooth symmetric $m$-tensor field on $M$ which satisfies $If = 0$, then $f = dh$ for some smooth symmetric $m-1$-tensor field $h$ on $M$ with $h|_{\partial M} = 0$. (If $m=0$ then $f=0$).        
\end{theorem}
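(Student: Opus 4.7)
The plan is to reduce the statement to a transport equation on $SM$ and then use the tools available on a simple surface (in particular the existence of holomorphic integrating factors, cf.\ the GAF/surjectivity results) to cut off the Fourier content of the primitive. Concretely, associate to the symmetric $m$-tensor $f$ the function $\tilde f \in \C^\infty(SM)$ defined by $\tilde f(x,v) = f_x(v,\dots,v)$. Because each entry involves $v$ at most $m$ times, $\tilde f$ has Fourier support contained in $\{|k|\le m\}$ with $\tilde f_k \in \Omega_k$ of the same parity as $m$. The hypothesis $If=0$ means that, via the standard argument, there is a smooth $u:SM\to\Rm$ with $Xu=-\tilde f$ and $u|_{\partial SM}=0$; one verifies smoothness up to the boundary using that $(M,g)$ is simple and classical regularity results for the transport equation.

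Next I would Fourier-decompose $u=\sum_k u_k$ with $u_k\in \Omega_k$. Using $X=\eta_++\eta_-$, the transport equation becomes the system
\begin{equation*}
    \eta_+ u_{k-1} + \eta_- u_{k+1} = -\tilde f_k, \qquad k\in\Zm,
\end{equation*}
where $\tilde f_k=0$ for $|k|>m$, and the $u_k$ also split according to parity. The goal is then to show $u_k=0$ for $|k|\ge m$, so that $u$ corresponds to a symmetric $(m-1)$-tensor $h$ on $M$; the identity $Xu=-\tilde f$ will then translate into $f = dh$ (symmetrized covariant derivative), and $u|_{\partial SM}=0$ gives $h|_{\partial M}=0$.

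The main step, and by far the hardest, is the Fourier truncation. For $|k|>m$ the system decouples into the pure relations $\eta_+u_{k-1}+\eta_-u_{k+1}=0$. On a simple surface, one has an abundance of holomorphic (resp. antiholomorphic) first integrals: for every $k_0$ and every $w\in \Omega_{k_0}$ one can find $v\in \Omega_{k_0}\oplus\Omega_{k_0+2}\oplus\cdots$ with $Xv=0$ and prescribed top Fourier mode $w$, and similarly for the antiholomorphic side. Testing $Xu=-\tilde f$ against such a $v$ and integrating by parts (using $u|_{\partial SM}=0$ and the fact that $v$ has only Fourier modes $\ge k_0$ while $\tilde f$ has modes $|k|\le m$) kills the right-hand side whenever $k_0>m$. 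Pairing with the right hierarchy of $v$'s, combined with the ellipticity of $\eta_\pm$ on the simply connected $M$ (so $\eta_\pm w = 0$ with $w|_{\partial M}=0$ forces $w\equiv 0$), yields $u_k = 0$ for $|k|\ge m$ by descending induction on $|k|$.

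Finally, once $u$ is supported in Fourier modes $|k|\le m-1$ of appropriate parity, standard identifications show $u=\tilde h$ for a unique smooth symmetric $(m-1)$-tensor $h$, and $Xu=-\tilde f$ is exactly the pointwise identity $\widetilde{dh}=\tilde f$, i.e.\ $f=dh$. The boundary condition $u|_{\partial SM}=0$ gives $h|_{\partial M}=0$. The $m=0$ case is immediate since there $u$ itself must vanish. The principal obstacle is clearly the holomorphic-integrating-factor construction used for the Fourier truncation: this is precisely what requires simplicity of $(M,g)$ and constitutes the technical core of the Paternain--Salo--Uhlmann proof.
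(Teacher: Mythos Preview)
The paper does not contain a proof of this theorem at all: it is quoted verbatim from \cite{Paternain2011a} and used as a black box input to the proof of Theorem~\ref{thm:injectivity}. There is therefore no ``paper's own proof'' to compare your proposal against.

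That said, your outline is a faithful high-level sketch of the argument in \cite{Paternain2011a}. You correctly identify the reduction to the transport equation $Xu=-\tilde f$ with $u|_{\partial SM}=0$, the Fourier decomposition via $\eta_\pm$, and that the crux is the truncation $u_k=0$ for $|k|\ge m$. You also correctly flag that this truncation relies on the abundance of (anti)holomorphic invariant distributions on a simple surface, which in \cite{Paternain2011a} is obtained from the surjectivity of $I_0^\star$ (equivalently, the solvability of $Xw=0$ with prescribed zeroth Fourier mode) combined with the Guillemin--Kazhdan raising/lowering structure. One point to be careful about: the invariant $v$ you test against is not in general smooth on $SM$ but only a distribution with one-sided Fourier support, so the integration-by-parts/pairing step has to be set up in the distributional sense, and the descent argument is organized slightly differently than a naive induction (one shows the odd/even tails of $u$ are holomorphic and antiholomorphic simultaneously, hence have finite Fourier content forced to zero by the $\eta_\pm$ injectivity you mention). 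Apart from this technical caveat your plan matches the original proof; it is simply not reproduced in the present paper.
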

Thm. \ref{thm:PSU} can be used to establish full injectivity of the ray transform over $\Omega_k$ and $X_\perp \Omega_k$. 

\begin{theorem}\label{thm:injectivity}
    Let $(M,g)$ be a simple 2D manifold and let $k$ any integer. 
    \begin{itemize}
	\item[(i)] If $f\in \Omega_k$ is such that $If = 0$, then $f = 0$. 
	\item[(ii)] If $f \in \Omega_k$ with $f|_{\partial M} = 0$ is such that $I[X_\perp f] = 0$, then $f = 0$.
    \end{itemize}    
\end{theorem}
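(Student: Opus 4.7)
The plan is to reduce both statements to Theorem \ref{thm:PSU} together with the uniqueness of the Dirichlet problem for the elliptic operators $\eta_\pm$ on a simply connected surface, recalled at the end of Section \ref{sec:SM}. Since PSU extends to complex symmetric tensors by splitting into real and imaginary parts, we may apply it directly to the complex quantities below.

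For (i), the case $k=0$ follows immediately from PSU, and the case $k<0$ reduces to $k>0$ by complex conjugation, so take $k>0$. A function $f\in\Omega_k$ has a unique nonzero Fourier mode at index $k$, which lies in the admissible set $\{-k,-k+2,\ldots,k\}$ of a (complex) symmetric $k$-tensor; thus $f$ is the $SM$-realization of such a tensor. If $If=0$, Theorem \ref{thm:PSU} yields $h\in C^\infty(SM)$ representing a symmetric $(k-1)$-tensor, with Fourier modes supported in $\{-(k-1),-(k-3),\ldots,k-1\}$, satisfying $h|_{\partial M}=0$ and $Xh=f$. Writing $h=\sum_j h_j$ and using $X=\eta_+ + \eta_-$ with $\eta_\pm:\Omega_j\to\Omega_{j\pm 1}$, matching Fourier modes in $Xh=f$ produces a triangular cascade: at the bottom mode $-k$ only $\eta_- h_{-k+1}$ contributes, giving $\eta_- h_{-k+1}=0$, hence $h_{-k+1}=0$ by unique continuation; each subsequent mode equation below $k$ reduces, after using the vanishings already obtained, to $\eta_- h_j=0$ with vanishing boundary trace, whence $h_j=0$. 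The top mode then reads $f=\eta_+ h_{k-1}=0$.

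For (ii), again take $k>0$. Since $X_\perp f=-i(\eta_+ f-\eta_- f)$ has Fourier modes only at $k\pm 1$, it represents a (complex) symmetric $(k+1)$-tensor. Applying Theorem \ref{thm:PSU} to $X_\perp f$ produces $h\in C^\infty(SM)$ with modes in $\{-k,-k+2,\ldots,k\}$, $h|_{\partial M}=0$, and $Xh=X_\perp f$. At all modes where $X_\perp f$ contributes nothing, the same cascade as in (i) successively forces $h_{-k}=h_{-k+2}=\cdots=h_{k-2}=0$. What remains are the two equations at modes $k-1$ and $k+1$:
\begin{align*}
    \eta_- h_k = i\eta_- f, \qquad \eta_+ h_k = -i\eta_+ f.
\end{align*}
The first, combined with $h_k|_{\partial M}=f|_{\partial M}=0$, gives $\eta_-(h_k-if)=0$, hence $h_k=if$ by unique continuation. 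Substituting into the second yields $2i\eta_+ f=0$, and a final application of unique continuation with $f|_{\partial M}=0$ gives $f=0$.

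The main delicate step is the Fourier bookkeeping at the extremes of the cascade: identifying which terms in each mode equation vanish by support constraints on $h$, and which carry the nontrivial sources involving $\eta_\pm f$. Once this is correctly laid out, unique continuation for the elliptic operators $\eta_\pm$ under vanishing Dirichlet data finishes both parts in essentially the same manner.
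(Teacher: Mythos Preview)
Your proof is correct and follows essentially the same approach as the paper: apply Theorem~\ref{thm:PSU} to view $f$ (resp.\ $X_\perp f$) as a symmetric $k$- (resp.\ $(k+1)$-) tensor, then use the Guillemin--Kazhdan decomposition $X=\eta_++\eta_-$ to run a cascade on the Fourier modes of the resulting potential $h$, invoking the Dirichlet uniqueness for $\eta_\pm$ at each step. The only cosmetic difference is your endgame in (ii), where you solve one of the two extremal equations for $h_k$ and substitute into the other, whereas the paper keeps both equations $\eta_-(h_k-if)=0$, $\eta_+(h_k+if)=0$ and sums/subtracts; these are equivalent.
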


\begin{proof}
    When $k=0$, (i) is due to \cite{Mukhometov1975} and (ii) is due to \cite{Anikonov1997}. We now treat the case $k>0$, the case $k<0$ being completely similar. \medskip

    \noindent {\bf Proof of (i).} $f \in \Omega_k$ is a particular case of symmetric $k$-tensor, so if $If = 0$, by Theorem \ref{thm:PSU} there exists $h$ a $k-1$-tensor such that $Xh = f$ on $SM$ and $h|_{\partial M} = 0$. That $h$ is a $k-1$ tensor means that it decomposes into 
    \begin{align*}
	h = h_{-k+1} + h_{-k+3} + \dots + h_{k-3} + h_{k-1}, \qquad h_l \in \Omega_l,
    \end{align*}
    and $h|_{\partial M} = 0$ implies that $h_l|_{ \partial M} = 0$ for every $l$. Writing $Xh = (\eta_+ + \eta_-) h = f$ and projecting this equation onto each $\Omega_l$, we obtain the relations:
    \begin{align*}
	\eta_- h_{-k+1} = 0, \quad \eta_- h_{-k+3} + \eta_+ h_{-k+1} = 0, \quad \dots, \quad \eta_- h_{k-1} + \eta_+ h_{k-3} = 0, \quad \eta_+ h_{k-1} = f.
    \end{align*}
    Since each $h_{l}$ vanishes at the boundary, the first equation entails that $h_{-k+1} = 0$, and the vanishing of all coefficients $h_l$ follows in cascade by using all equations above but the last one. Since $h_{k-1} = 0$, the last equation concludes that $f = \eta_+ h_{k-1} = 0$. \medskip 

    \noindent {\bf Proof of (ii).} For $f \in \Omega_k$, $X_\perp f$ is a particular case of symmetric $(k+1)$-tensor, so if $I[X_\perp f] = 0$, by Theorem \ref{thm:PSU} there exists $h$ a $k$-tensor such that $Xh = f$ on $SM$ and $h|_{\partial M} = 0$. That $h$ is a $k$-tensor means that it decomposes into 
    \begin{align*}
	h = h_{-k} + h_{-(k-2)} + \dots + h_{k-2} + h_{k}, \qquad h_l \in \Omega_l,
    \end{align*}
    and $h|_{\partial M} = 0$ implies that $h_l|_{ \partial M} = 0$ for every $l$. Projecting the equation $(\eta_+ + \eta_-) h = X_\perp f = \frac{1}{i} (\eta_+ - \eta_-) f$ onto each $\Omega_l$, we obtain the relations:
    \begin{align}
	\eta_- h_{-k} = 0, \quad \eta_- h_{-k+2} + \eta_+ h_{-k} = 0,\quad  \dots, \quad \eta_- h_{k-2} + \eta_+ h_{k-4} = 0,
	\label{eq:rel1}
    \end{align}
    and the last two equations read
    \begin{align}
	\eta_- h_{k} - \eta_+ h_{k-2} = - \frac{1}{i} \eta_- f \qandq \eta_+ h_k = \frac{1}{i} \eta_+ f. 
	\label{eq:rel2}	
    \end{align}
    Equations \eqref{eq:rel1} and the boundary conditions imply successively that $h_{-k} = h_{-k+2} = \dots = h_{k-2} = 0$. Therefore equations \eqref{eq:rel2} become
    \begin{align*}
	\eta_- (h_k + \frac{1}{i} f) = 0 \qandq \eta_+ (h_k - \frac{1}{i} f) = 0. 
    \end{align*}
    With the boundary conditions $(h_k \pm \frac{1}{i} f)|_{\partial M} = 0$, this implies $h_k \pm \frac{1}{i} f = 0$, which upon summing and substracting yields $h_k  = f = 0$. Theorem \ref{thm:injectivity} is proved.  
\end{proof}

\section{Conjugated Hilbert transforms and commutators} \label{sec:Hk}

In the $L^2(SM)$ decomposition \eqref{eq:L2decomp}, a diagonal operator of particular interest is the so-called fiberwise {\em Hilbert transform} $H:\C^\infty(SM)\to \C^\infty(SM)$, whose action is best described on each Fourier component by
\begin{align*}
    H u_k := -i \sgn {k} u_k, \quad k\in \Zm, \qquad \text{with the convention }\quad \sgn{0} = 0.
\end{align*}
Pestov and Uhlmann proved in \cite{Pestov2005} the commutator formula 
\[ [H,X] u = X_\perp u_0 + (X_\perp u)_0, \qquad u\in \C^\infty(SM), \]
which was a crucial tool in the derivation of reconstruction formulas for functions and solenoidal vector fields in \cite{Pestov2004}.

A generalization of this formula that is adapted to our problem, is obtained by shifting the Hilbert transform in frequency: define the shifted Hilbert transform $H_{(k)}$ on the Fourier components of $u$ as
\begin{align*}
  H_{(k)} u_l = -i \sgn{l-k} u_l, \quad l\in \Zm.
\end{align*}
This can be achieved by considering $h$ a non-vanishing section of $\Omega_1$ and defining 
\begin{align}
    H_{(k)} u := h^k H(h^{-k} u).
    \label{eq:Hk}
\end{align}
We then notice that this definition does not depend on the choice of $h$, as its action on the harmonic components of $u$ is described two equations above. In isothermal coordinates, one has the obvious choice $h(\x,\theta) = e^{i\theta}$. As in the case of $H\equiv H_{(0)}$, we now establish commutation relations.
\begin{lemma}\label{lem:commutatork}
    For every $u\in \C^\infty(SM)$, the following formulas hold
    \begin{align}
	[H_{(k)}, X] u &= X_\perp u_k + (X_\perp u)_k, \label{eq:commutatork} \\
	[H_{(k)}, X_\perp] u &= - X u_k - (X u)_k. \label{eq:commutatork2} 
    \end{align}        
\end{lemma}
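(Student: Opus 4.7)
The plan is to verify both identities componentwise in the Fourier decomposition \eqref{eq:L2decomp}, using the Guillemin--Kazhdan splitting $X=\eta_++\eta_-$ and $X_\perp=\frac{1}{i}(\eta_+-\eta_-)$, together with the mapping property $\eta_\pm:\Omega_l\to\Omega_{l\pm 1}$. These yield the componentwise identities $(Xu)_l=\eta_+u_{l-1}+\eta_-u_{l+1}$ and $(X_\perp u)_l = \frac{1}{i}(\eta_+u_{l-1}-\eta_-u_{l+1})$, while the definition of $H_{(k)}$ gives $(H_{(k)}v)_l = -i\,\sgn{l-k}\,v_l$. Since multiplication by $h\in\Omega_1$ shifts Fourier modes by $+1$, the alternative description \eqref{eq:Hk} is independent of $h$ and recovers the same Fourier action.

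With this setup, I would fix $l\in\Zm$ and compute
\[
    ([H_{(k)}, X]u)_l = -i\bigl[\sgn{l-k}-\sgn{l-1-k}\bigr]\eta_+ u_{l-1} + i\bigl[\sgn{l+1-k}-\sgn{l-k}\bigr]\eta_- u_{l+1}.
\]
With the convention $\sgn{0}=0$, the bracketed sign differences vanish unless $l-k\in\{0,1\}$ (for the first) or $l-k\in\{-1,0\}$ (for the second), so the commutator is supported in the three Fourier modes $l\in\{k-1,k,k+1\}$. Expanding the right-hand side of \eqref{eq:commutatork} gives $X_\perp u_k = -i\,\eta_+u_k + i\,\eta_- u_k$, contributing to $\Omega_{k+1}$ and $\Omega_{k-1}$ respectively, while $(X_\perp u)_k = -i\,\eta_+ u_{k-1}+i\,\eta_- u_{k+1}\in\Omega_k$. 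Matching mode by mode in $l\in\{k-1,k,k+1\}$ completes the proof of \eqref{eq:commutatork}.

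Identity \eqref{eq:commutatork2} follows from the same template: rewriting $[H_{(k)}, X_\perp]u$ produces sign-difference coefficients in front of $\eta_+u_{l-1}$ and $\eta_- u_{l+1}$, again supported in the three modes $l\in\{k-1,k,k+1\}$, which one checks agree with the Fourier components of $-Xu_k-(Xu)_k$. The only real obstacle is bookkeeping: ensuring the convention $\sgn{0}=0$ and the factor $\frac{1}{i}$ in $X_\perp$ are applied uniformly so that the signs in the three nontrivial modes line up correctly. No analytic subtlety beyond the original Pestov--Uhlmann identity is required, since the shift in frequency is implemented purely algebraically at the level of Fourier modes.
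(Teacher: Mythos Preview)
Your proposal is correct and follows essentially the same approach as the paper: both arguments reduce to the Guillemin--Kazhdan splitting $X=\eta_++\eta_-$, $X_\perp=\frac{1}{i}(\eta_+-\eta_-)$ together with the explicit Fourier action of $H_{(k)}$, and both observe that the commutator is supported on the three modes $l\in\{k-1,k,k+1\}$. The only cosmetic difference is that the paper organizes the computation by evaluating $[H_{(k)},\eta_\pm]u_n$ on each input mode $n$ and then summing, whereas you project directly onto each output mode $l$ via the sign-difference coefficients; the content is identical.
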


\begin{proof}
    We will rely on the fact that 
    \begin{align*}
	H_{(k)}|_{\Omega_l} = \left\{ \begin{array}{cc}
	    0 & l=k, \\
	    -i\ \text{Id}|_{ \Omega_l} & l > k, \\
	    i\ \text{Id}|_{\Omega_l} & l < k,
	\end{array}
	\right.
    \end{align*}
    with the fact that $\eta_{\pm}:\Omega_p\to \Omega_{p\pm 1}$, we deduce that for any $u_n\in \Omega_n$, 
    \begin{align*}
	[H_{(k)},\eta_+] u_n = \left\{  \begin{array}{cc}
	    0 & n \notin \{ k-1, k \}, \\
	    H_{(k)}\eta_+ u_k = -i\ \eta_+ u_k & n=k, \\
	    H_{(k)}\eta_+ u_{k-1}  - \eta_+ H_{(k)} u_{k-1} = -i \eta_+ u_{k-1} & n=k-1,
	\end{array}
	\right.
    \end{align*}
    as well as 
    \begin{align*}
	[H_{(k)},\eta_-] u_n = \left\{ \begin{array}{cc}
	    0 & n \notin \{ k, k+1 \}, \\
	    H_{(k)}\eta_- u_k = i\ \eta_- u_k & n=k, \\
	    H_{(k)}\eta_- u_{k+1}  - \eta_- H_{(k)} u_{k+1} = i \eta_- u_{k+1} & n=k+1.
	\end{array}
	\right.
    \end{align*}
    Writing $u = \sum_{n=-\infty}^{\infty} u_n$, we now sum the relations above
    \begin{align*}
	[H_{(k)},X] \sum_{n\in\Zm} u_n &= \sum_{n\in \Zm} [H_{(k)},\eta_+] u_n + \sum_{n\in\Zm} [H_{(k)},\eta_-] u_n \\
	&= -i\ \eta_+ u_k -i \eta_+ u_{k-1} + i\ \eta_- u_k + i \eta_- u_{k+1} \\
	&= X_\perp u_k + (X_\perp u)_k. 
    \end{align*} 
    Similarly,
    \begin{align*}
	[H_{(k)},X_\perp] u &= i^{-1} \left( [H_{(k)},\eta_+]u - [H_{(k)},\eta_-]u  \right) \\
	&= - \eta_+ u_k - \eta_+ u_{k-1} - \eta_- u_k - \eta_- u_{k+1} \\
	&= - X u_k - (Xu)_k.
    \end{align*} 
    The proof is complete. 
\end{proof}

\section{Fredholm equations over $\Omega_k$ and $X_\perp \Omega_k$} \label{sec:Fredholm}

\subsection{Transport equations and past results (the case $k=0$)}

For $f\in L^2(SM)$, let us define $u^f(\x,\theta)$ to be the solution to the problem 
\begin{align}
    X u = -f, \qquad u|_{\partial_- SM} = 0,
    \label{eq:uf}
\end{align}
so that $u|_{\partial_+ SM} = If$. For $w$ defined on $\partial_+ SM$, we also denote by $w_\psi = w\circ\alpha\circ\psi$ the unique solution to the transport problem
\begin{align*}
    X u = 0, \qquad u|_{\partial_+ SM} = w,
\end{align*}
where $\alpha$ is the scattering relation and $\psi(\x,\theta) := (\gamma_{\x,\theta} (\tau(\x,\theta)), \dot\gamma_{\x,\theta} (\tau(\x,\theta)))\in \partial_- SM$ for any $(\x,\theta)\in SM$.

When $f\in \C_0^\infty (M)$, we define the operator $W f := (X_\perp u^f)_0$. It is shown in \cite{Pestov2004} that when the metric is simple, the operator $W$ can be extended as a smoothing operator $W:L^2(M)\to \C^\infty(M)$. Moreover, this operator vanishes identically if the scalar curvature is constant. The $L^2(M)$-adjoint operator $W^\star$ is given by 
\begin{align*}
  W^\star h := \left( u^{X_\perp h} \right)_0.
\end{align*}

Recall the following theorem due to Pestov and Uhlmann.
\begin{theorem}[Theorem 5.4 in \cite{Pestov2004}] \label{thm:PU}
    Consider $f\in L^2(M)$ and $h\in C_0^1(M)$ giving rise to the solenoidal vector field $X_\perp h$ and denote $I f$ and $I X_\perp h$ their respective X-ray transforms. Then one has the following two formulas
    \begin{align}
	f + W^2 f &= - (X_\perp w^{(f)}_\psi)_0, \quad\text{where}\quad  w^{(f)} := (H I f)_-, \label{eq:frc} \\
	h + (W^\star)^2 h &= - ( w^{(h)}_\psi )_0, \quad\text{where}\quad  w^{(h)} := (H I [X_\perp h])_+ . \label{eq:hrc} 
    \end{align}
\end{theorem}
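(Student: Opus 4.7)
The plan is to apply the Pestov--Uhlmann fiberwise commutator identities (Lemma \ref{lem:commutatork} at $k=0$) to the transport solution $u^f$ of \eqref{eq:uf}, extract scalar identities by projection onto parity and zero Fourier mode, and close the argument with a symmetry under the antipodal map $\sigma:(x,v)\mapsto(x,-v)$. I carry out the plan for \eqref{eq:frc}; \eqref{eq:hrc} is then the parity-swapped dual version.

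\smallskip

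\textbf{Derivation of \eqref{eq:frc}.} Let $u := u^f$, so $Xu=-f$ with $u|_{\partial_-SM}=0$. Since $f\in H_0$, $Hf=0$, and \eqref{eq:commutatork} yields $X(Hu)=-X_\perp u_0-Wf$; taking the $\theta$-odd part (noting $Wf\in H_0$ is even while $X_\perp u_0$ is odd) gives $X(Hu)_-=-Wf$. Applying \eqref{eq:commutatork2} to $u$ and restricting to the zero Fourier mode kills $(HX_\perp u)_0$ (as $H|_{H_0}=0$) and $(Xu_0)_0$ (as $Xu_0\in H_1\oplus H_{-1}$), leaving $(X_\perp(Hu)_-)_0=-f$. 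A direct fiberwise computation---decomposing $u|_{\partial SM}$, which equals $If$ on $\partial_+SM$ and $0$ on $\partial_-SM$, into antipodal parts and then applying $H$---identifies the boundary trace $(Hu)_-|_{\partial_+SM}=w^{(f)}$. Setting $r:=(Hu)_--w^{(f)}_\psi$, we obtain $Xr=-Wf$ with $r|_{\partial_+SM}=0$. From \eqref{eq:XXperpiso} one reads off $X\circ\sigma^*=-\sigma^*\circ X$ and $X_\perp\circ\sigma^*=-\sigma^*\circ X_\perp$, so $\tilde r:=-r\circ\sigma$ satisfies $X\tilde r=-Wf$ (using that $Wf\in H_0$ is $\sigma$-invariant) and $\tilde r|_{\partial_-SM}=0$; by uniqueness for \eqref{eq:uf}, $\tilde r=u^{Wf}$, and integrating in $\theta$ then gives $(X_\perp r)_0=(X_\perp u^{Wf})_0=W^2 f$. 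Combining,
\[
W^2f = (X_\perp(Hu)_-)_0-(X_\perp w^{(f)}_\psi)_0 = -f-(X_\perp w^{(f)}_\psi)_0,
\]
which is \eqref{eq:frc}.

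\smallskip

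\textbf{Derivation of \eqref{eq:hrc}.} Repeating the scheme with $u=u^{X_\perp h}$, $h\in H_0$, $h|_{\partial M}=0$: the relation $HX_\perp h=-Xh$ (a consequence of \eqref{eq:commutatork2} applied to $h\in H_0$) turns \eqref{eq:commutatork} into $X(Hu)=Xh-X_\perp u_0-(X_\perp u)_0$, whose $\theta$-even part reads $X((Hu)_+-h)=-X_\perp u_0=-X_\perp(W^\star h)$ (noting $u_0=(u^{X_\perp h})_0=W^\star h$). The boundary-trace computation now gives $((Hu)_+-h)|_{\partial_+SM}=w^{(h)}=(HI[X_\perp h])_+$ (using $h|_{\partial M}=0$), so $r:=(Hu)_+-h-w^{(h)}_\psi$ solves $Xr=-X_\perp(W^\star h)$, $r|_{\partial_+SM}=0$. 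The same $\sigma$-symmetry yields $r=u^{X_\perp(W^\star h)}\circ\sigma$, whose zero mode equals $(u^{X_\perp(W^\star h)})_0=(W^\star)^2 h$; direct computation also gives $r_0=((Hu)_+)_0-h-(w^{(h)}_\psi)_0=-h-(w^{(h)}_\psi)_0$ since $H$ annihilates $H_0$, closing \eqref{eq:hrc}.

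\smallskip

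\textbf{Expected obstacle.} The most delicate point is the boundary-trace identification $(Hu)_-|_{\partial_+SM}=w^{(f)}$ (and its even-parity analogue for \eqref{eq:hrc}): since $H$ is fiberwise while $u|_{\partial SM}$ is the discontinuous extension of $If$ by zero across $\partial_-SM$, one must fix and justify a consistent convention for viewing $If$ as a function on $\partial SM$ so that the definitions match. Once that convention is in place, the remainder is systematic bookkeeping of parities, Fourier zero modes, and the $\sigma$-covariance of $X,X_\perp$ read off from \eqref{eq:XXperpiso}.
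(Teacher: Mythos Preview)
Your argument is correct and follows essentially the same route as the paper, which derives Theorem~\ref{thm:PU} as the special case $k=0$ of Theorem~\ref{thm:main}: apply the commutator relations of Lemma~\ref{lem:commutatork} to the transport solution, split into parity components, identify the boundary trace as the data $w^{(f)}$ (resp.\ $w^{(h)}$), and project onto the zero Fourier mode.

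Two minor variations are worth flagging. First, where the paper obtains $(X_\perp(Hu^f)_-)_0=-f$ by applying $H$ a second time and using $H^2=-\mathrm{Id}+(\cdot)_0$, you reach the same identity in one stroke via the second commutator~\eqref{eq:commutatork2}; the two computations are equivalent. Second, and more interestingly, the paper writes directly $H u^f_- = u^{Wf}+w^{(f)}_\psi$ (and similarly $(Hu^{X_\perp h})_+ - h = u^{X_\perp W^\star h}+w^{(h)}_\psi$), whereas you observe that $r=(Hu)_--w^{(f)}_\psi$ vanishes on $\partial_+SM$ rather than on $\partial_-SM$ and then invoke the antipodal covariance $X\circ\sigma^*=-\sigma^*\circ X$, $X_\perp\circ\sigma^*=-\sigma^*\circ X_\perp$ to identify $r$ with $\mp\,u^{\cdots}\circ\sigma$. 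Since $(\sigma^*g)_0=g_0$, the zero-mode is insensitive to which boundary face carries the homogeneous condition, so both routes land on the same quantity. Your $\sigma$-trick is a clean, explicit way to handle this boundary-side bookkeeping and directly disposes of the ``expected obstacle'' you flagged.
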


Although the initial theorem is stated for a simple manifold, formula \eqref{eq:frc} only requires that the transport equation \eqref{eq:uf} be well-defined, and still holds true as long as $M$ is non-trapping. If the metric is simple, then \eqref{eq:frc}-\eqref{eq:hrc} both satisfy Fredholm alternatives. If the metric is not simple, then the operators $W,W^\star$ may no longer be smoothing operators. The proof of Theorem \ref{thm:PU} is covered as a particular case $k=0$ of Theorem \ref{thm:main} in the next section.

\subsection{Fredholm equations}

We now derive a generalization of the reconstruction formulas above, using the shifted Hilbert transforms introduced in Sec. \ref{sec:Hk}. Before stating the theorem, we introduce the generalization of the operator $W$ above. For any integer $k$, let us define the operator $W_k:\Omega_k\to \Omega_k$ by 
\begin{align}
    W_k f := (X_\perp u^f)_k, \qquad f \in \Omega_k,
    \label{eq:Wk}
\end{align}
whose properties will be studied in section \ref{sec:propWk}. We will show in particular that the operator $W_k^\star:\Omega_k\to \Omega_k$ defined by 
\begin{align}
    W_k^\star h := (u^{X_\perp h})_k, \qquad h\in \Omega_k, \quad h|_{\partial M} = 0,
    \label{eq:Wkstar}
\end{align}
is the adjoint operator of $W_k$ for the $L^2(SM)$ inner product structure on $H_k$. Moreover, when $(M,g)$ is simple, both operators will be shown to be compact.

\begin{theorem}\label{thm:main}
    Let $k\in \Zm$ and define $\sigma_k := +/-$ if $k$ is an even/odd integer, respectively. Consider $f \in \Omega_k$ and $h\in \Omega_k$ with $h|_{\partial M} =0$ giving rise to the tensor field $X_\perp h$ and denote $I f$ and $I [X_\perp h]$ their respective X-ray transforms. Then one has the following two formulas
    \begin{align}
	f + W_k^2 f &= - (X_\perp w^{(f)}_\psi)_k, \quad\text{where}\quad  w^{(f)} := H_{(k)} (I f)_{-\sigma_k}|_{\partial_+ SM}, \label{eq:fkrc} \\
	h + (W_k^\star)^2 h &= - ( w^{(h)}_\psi )_k, \quad\text{where}\quad  w^{(h)} := H_{(k)} I[X_\perp h]_{\sigma_k}|_{\partial_+ SM} . \label{eq:hkrc} 
    \end{align}
\end{theorem}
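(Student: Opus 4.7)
The plan is to adapt the Pestov--Uhlmann argument of Theorem \ref{thm:PU} by replacing $H$ with $H_{(k)}$ and invoking Lemma \ref{lem:commutatork} in place of the classical commutator. The common scheme for both parts is: (a) use the commutators to express $f$ (resp.\ $h$) as the $k$-th Fourier mode of a quantity built from $H_{(k)} u^{(\cdot)}$; (b) using that $X$ and $X_\perp$ swap the parity decomposition \eqref{eq:oddeven} and that $\Omega_k$ has parity $\sigma_k = (-1)^k$, reduce to a single parity component; (c) split this component into an $X$-invariant, boundary-determined piece $w^{(\cdot)}_\psi$ and an interior remainder $E$ vanishing on $\partial_+ SM$; (d) identify the $k$-th Fourier mode of $E$ (resp.\ of $E - h$) with $W_k^2 f$ (resp.\ $(W_k^\star)^2 h$) via the change of variable $\theta \mapsto \theta + \pi$.

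\textit{For (i):} applying \eqref{eq:commutatork2} to $u^f$ and projecting onto $\Omega_k$, the contributions $(H_{(k)} X_\perp u^f)_k$ and $(X u^f_k)_k$ vanish (since $H_{(k)}|_{\Omega_k} = 0$ and $X u^f_k \in \Omega_{k\pm 1}$), while $(X u^f)_k = -f$. This yields $(X_\perp H_{(k)} u^f)_k = -f$, which by the parity of $X_\perp$ and of $\Omega_k$ refines to
\[
-f = \bigl(X_\perp (H_{(k)} u^f)_{-\sigma_k}\bigr)_k.
\]
Write $(H_{(k)} u^f)_{-\sigma_k} = w^{(f)}_\psi + E$ with $E|_{\partial_+ SM} = 0$; this matches the theorem's definition of $w^{(f)}$ by the commutativity of $H_{(k)}$ with parity projection. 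From \eqref{eq:commutatork} and $H_{(k)} f = 0$, we get $X H_{(k)} u^f = -X_\perp u^f_k - W_k f$, whose $\sigma_k$-parity component is $-W_k f$ (since $X_\perp u^f_k$ has parity $-\sigma_k$). Hence $X E = -W_k f$, and $E$ coincides with the unique transport solution $u^{W_k f, +}$ of $X u = -W_k f$ with $u|_{\partial_+ SM} = 0$. The parity $\sigma_k$ of $W_k f$ combined with $\gamma_{\x, \theta + \pi}(s) = \gamma_{\x, \theta}(-s)$ yields $u^{W_k f, +}(\x, \theta) = -\sigma_k\, u^{W_k f}(\x, \theta + \pi)$; together with the $\theta$-reversal rule $X_\perp\bigl(u(\x,\theta+\pi)\bigr) = -(X_\perp u)(\x,\theta+\pi)$ and the factor $e^{ik\pi} = \sigma_k$ from the change of variable in the $k$-th Fourier coefficient, this produces $(X_\perp E)_k = \sigma_k^2\, (X_\perp u^{W_k f})_k = W_k^2 f$. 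Substituting into $-f = (X_\perp w^{(f)}_\psi)_k + (X_\perp E)_k$ gives \eqref{eq:fkrc}.

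\textit{For (ii):} set $F := X_\perp h$. Applying \eqref{eq:commutatork2} to $h$ and using $H_{(k)} h = 0$, $h_k = h$, $(Xh)_k = 0$ yields $H_{(k)} X_\perp h = -Xh$, which fed into \eqref{eq:commutatork} applied to $u^F$ gives
\[
X H_{(k)} u^F = X h - X_\perp(W_k^\star h) - (X_\perp u^F)_k,
\]
where $W_k^\star h = u^F_k$. Decompose $(H_{(k)} u^F)_{\sigma_k} = w^{(h)}_\psi + E'$ with $E'|_{\partial_+ SM} = 0$, matching the theorem's definition of $w^{(h)}$. Extracting the $(-\sigma_k)$-parity part of the displayed equation (and noting $(X_\perp u^F)_k \in \Omega_k$ has parity $\sigma_k$) yields $X E' = X h - X_\perp(W_k^\star h)$, so with $h|_{\partial M} = 0$, $E' - h$ is the unique transport solution of $X u = -X_\perp(W_k^\star h)$ with $u|_{\partial_+ SM} = 0$. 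The same $\theta \mapsto \theta + \pi$ parity argument with source of parity $-\sigma_k$ gives $(E' - h)_k = (W_k^\star)^2 h$, hence $(E')_k = h + (W_k^\star)^2 h$. Finally, $H_{(k)} u^F_k = 0$ forces $\bigl((H_{(k)} u^F)_{\sigma_k}\bigr)_k = 0$, so $0 = (w^{(h)}_\psi)_k + (E')_k$, which rearranges to \eqref{eq:hkrc}.

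The main obstacle is the parity bookkeeping, together with the identification $(X_\perp E)_k = W_k^2 f$ (resp.\ $(E' - h)_k = (W_k^\star)^2 h$) via the $\theta \mapsto \theta + \pi$ change of variable. This hinges on $\Omega_k$ having parity $\sigma_k = (-1)^k$, so that the transport solution with the opposite boundary condition picks up a sign $\pm \sigma_k$ under the involution, which combines with the factor $e^{ik\pi} = \sigma_k$ from the Fourier integral to produce the net factor $\sigma_k^2 = 1$. A secondary care concerns interpreting the boundary quantities $H_{(k)}(If)|_{\partial_+ SM}$ and $H_{(k)} I[X_\perp h]|_{\partial_+ SM}$ as restrictions to $\partial_+ SM$ of the $SM$-quantities $H_{(k)} u^f$ and $H_{(k)} u^F$, since $H_{(k)}$ is a fiberwise operator that requires the full fiber data.
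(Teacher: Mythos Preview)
Your proof is correct and follows the same overall architecture as the paper (commutator formulas combined with a transport decomposition), but differs in two places. First, to obtain the key identity $-f=(X_\perp (H_{(k)}u^f)_{-\sigma_k})_k$, you apply the commutator $[H_{(k)},X_\perp]$ of \eqref{eq:commutatork2} directly; the paper instead applies $H_{(k)}$ twice to the transport equation, using $H_{(k)}^2 u_{-\sigma_k}=-u_{-\sigma_k}$ together with \eqref{eq:commutatork}. Second, and more substantively, in the decomposition step you correctly identify the remainder $E$ as the transport solution with vanishing data on $\partial_+ SM$, and then use the antipodal substitution $\theta\mapsto\theta+\pi$ to prove $(X_\perp E)_k=(X_\perp u^{W_k f})_k$; the paper instead writes the remainder directly as $u^{W_k f}$ with the $\partial_- SM$ convention of \eqref{eq:uf}, which only agrees with $E$ after applying $(X_\perp\cdot)_k$---a fact equivalent to your parity computation $\sigma_k^2=1$ but left implicit there. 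Your route therefore buys explicitness on the boundary-matching point at the price of the extra involution bookkeeping; the paper's route is more streamlined but relies tacitly on the same cancellation. The same remarks apply verbatim to part (ii).
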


\begin{proof}
    {\bf Proof of \eqref{eq:fkrc}:} Let $f \in \Omega_k$ for some fixed $k$ and consider the transport equation 
    \begin{align}
	X u = - f, \qquad u|_{\partial_- SM} = 0,
	\label{eq:transk}
    \end{align}
    Upon splitting the above equation into even/odd parts $u = u_{\sigma_k} + u_{-\sigma_k}$, we have $X u^f_{-\sigma_k} = -f$ and $X u_{\sigma_k}^f = 0$. Applying $H_{(k)}$ to \eqref{eq:transk}, using the commutator \eqref{eq:commutatork}, splitting into odd and even parts and using the fact that $H_{(k)}f = 0$, we arrive at the equation
    \begin{align}
	\begin{split}
	    X (H_{(k)} u^f_{-\sigma_k}) &= - (X_\perp u^f)_k = -W_k f,  \\
	    H_{(k)} u_{-\sigma_k}^f|_{\partial_+ SM} &= w^f := H_{(k)} (I_k f)_{-\sigma_k}|_{\partial_+ SM} \quad \text{ (data)},
	\end{split}
	\label{eq:transHku}
    \end{align}
    where $W_k$ is defined in \eqref{eq:Wk}. Equation \eqref{eq:transHku} implies that $H_{(k)} u_{-\sigma_k}^f$ is nothing but 
    \begin{align*}
	H_{(k)} u_{-\sigma_k}^f = u^{W_k f} + w^f_\psi.
    \end{align*}
    Applying the operator $(X_\perp\cdot)_k$ to the relation above, we establish the relation
    \begin{align*}
	(X_\perp H_{(k)} u_{-\sigma_k}^f )_k = (X_\perp u^{W_k f})_k + (X_\perp w^f_\psi)_k = W_k^2 f + (X_\perp w^f_\psi)_k.
    \end{align*}
    Applying $H_{(k)}$ to \eqref{eq:transHku}, using the commutator relation again and the fact that $H_{(k)} W_k f = 0$ and $(H_{(k)}u_{-\sigma_k}^f)_k = 0$, we arrive at the relation
    \begin{align*}
	X( H_{(k)}^2 u_{-\sigma_k}^f) = - (X_\perp H_{(k)} u_{-\sigma_k}^f )_k.
    \end{align*}
    Combining the last two equations together and using the fact that 
    \begin{align*}
	X( H_{(k)}^2 u_{-\sigma_k}^f) = X ( - u_{-\sigma_k}^f + (u_{-\sigma_k}^f)_k ) = f, \qquad ( \text{using }(u_{-\sigma_k}^f)_k = 0)
    \end{align*}
    we arrive at \eqref{eq:fkrc}. \\

    {\bf Proof of \eqref{eq:hkrc}:} For a function $h\in \Omega_k$, let us consider the transport equation
    \begin{align*}
	Xu = -X_\perp h, \quad u|_{\partial_- SM} = 0.
    \end{align*}
    Applying $H_{(k)}$ to the above equation and using formulas \eqref{eq:commutatork}-\eqref{eq:commutatork2}, we arrive at
    \begin{align*}
	X(H_{(k)}u^{X_\perp h}) + X_\perp W_k^\star h + (X_\perp u^{X_\perp h})_k = -X_\perp  (H_{(k)}h ) + X (h_k) + (X h)_k,
    \end{align*}
    where $W_k^\star$ is defined in \eqref{eq:Wkstar}. Using the fact that $(Xh)_k = 0$ and $H_{(k)}h = 0$ and splitting into odd and even parts, we get the equation 
    \begin{align*}
	X (H_{(k)}u^{X_\perp h}_{\sigma_k} - h ) = - X_\perp (W_k^\star h),
    \end{align*}
    with boundary condition
    \begin{align*}
	H_{(k)}u^{X_\perp h}_{\sigma_k} - h|_{\partial_+ SM} = w^h := H_{(k)} (I(X_\perp h))_{\sigma_k} |_{\partial_+ SM}.
    \end{align*}
    We thus deduce that 
    \begin{align*}
	H_{(k)}u^{X_\perp h}_{\sigma_k} - h = u^{X_\perp (W^\star h)} + w^h_\psi,
    \end{align*}
    which upon taking the $k$-th Fourier component yields \eqref{eq:hkrc}. Theorem \ref{thm:main} is proved.     
\end{proof}

\subsection{Study of the operator $W_k$}\label{sec:propWk}

For $h\in \Omega_1$ non-vanishing smooth section (pick $e^{i\theta}$ in isothermal coordinates), the space $H_k$ is the completion of $h^k \C_0^\infty(M)$ for the $L^2(SM)$ topology. Recall that we define the space $\Omega_k := H_k\cap \C^\infty(SM)$. 

\begin{proposition}\label{prop:Wk}
    Assume the manifold $(M,g)$ is simple and let $h\in \Omega_1$ be non-vanishing. Then for any integer $k$, the operator $W_k: h^k\C_0^\infty(M)\to \Omega_k$ defined by 
    \begin{align}
	W_k f = (X_\perp u^f)_k,
	\label{eq:Wkdef}
    \end{align}
    can be extended to a smoothing operator $W_k:H_k\to \Omega_k$.
\end{proposition}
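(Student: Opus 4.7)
The plan is to generalize \cite[Proposition 5.1]{Pestov2004}, where the smoothing property of $W_0$ was established, by exhibiting the Schwartz kernel of $W_k$ explicitly and showing it is smooth on $M \times M$ using the simplicity of $(M,g)$.

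Work in the global isothermal chart of Section \ref{sec:SM} with $h(\x,\theta) = e^{i\theta}$, so that any $f\in h^k\C_0^\infty(M)$ takes the form $f(\x,\theta)=e^{ik\theta}\tilde f(\x)$ with $\tilde f\in \C_0^\infty(M)$. Let $\Theta(\x,\theta,t)$ denote the angle of $\dot\gamma_{\x,\theta}(t)$ in the isothermal frame; then the solution of \eqref{eq:uf} has the integral representation
\begin{align*}
u^f(\x,\theta) = \int_0^{\tau(\x,\theta)} e^{ik\Theta(\x,\theta,t)}\,\tilde f(\gamma_{\x,\theta}(t))\,dt.
\end{align*}
Since $W_k f \in \Omega_k$ one may write $W_k f(\x,\theta) = e^{ik\theta} F(\x)$ with $F\in \C^\infty(M)$, so it suffices to show that the reduced map $\tilde f\mapsto F$ is smoothing on $M$.

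Applying $X_\perp$ via \eqref{eq:XXperpiso}, differentiating under the integral sign (boundary contributions vanish since $\tilde f$ is supported away from $\partial M$), projecting onto the $k$-th Fourier mode by averaging against $(2\pi)^{-1}e^{-ik\theta}\,d\theta$, and changing variables $(\theta,t)\mapsto \y := \gamma_{\x,\theta}(t)$ (a diffeomorphism away from $t=0$, by simplicity) recasts $F$ as
\begin{align*}
F(\x) = \int_M K_k(\x,\y)\,\tilde f(\y)\,dV_g(\y),
\end{align*}
where $K_k(\x,\y)$ combines the inverse Jacobian of $\exp_\x$, the smooth phase $e^{ik(\Theta(\x,\y)-\theta(\x,\y))}$ (with $\theta(\x,\y)$ and $\Theta(\x,\y)$ the launching and arrival angles of the unique geodesic from $\x$ to $\y$), and smooth coefficients coming from $X_\perp$. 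Off the diagonal this kernel is manifestly smooth.

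The main obstacle is the a priori singularity of $K_k$ along $\y=\x$ of order $d_g(\x,\y)^{-1}$ inherited from the polar-coordinate Jacobian. The cancellation already exploited in \cite{Pestov2004} for $k=0$ continues to work here: the leading singular contribution is odd in the polar angular variable, so the $\theta$-integration annihilates it once the antisymmetric action of $X_\perp$ is accounted for. The extra factor $e^{ik(\Theta-\theta)}$ present for $k\neq 0$ is smooth and equals $1+O(d_g(\x,\y))$ near the diagonal, so it does not disturb this parity argument. Granting the cancellation, $K_k$ extends smoothly to $M\times M$, the reduced map $\tilde f\mapsto F$ is smoothing, and by density of $h^k\C_0^\infty(M)$ in $H_k$ the operator $W_k$ extends to a smoothing operator $H_k\to \Omega_k$.
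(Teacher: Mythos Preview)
Your overall strategy (pass to isothermal coordinates, write the kernel in geodesic polar coordinates, change variables to $M$, and check that the apparent $t^{-1}$ singularity on the diagonal is removable) is exactly the paper's, but the argument has a real gap at the point where you handle the $k$-dependence. You treat the only new feature for $k\neq 0$ as the multiplicative factor $e^{ik(\Theta-\theta)}=1+O(d_g)$ and conclude it cannot disturb the $k=0$ cancellation. This overlooks that $X_\perp$ also hits the phase $e^{ik\Theta}$ inside the integrand, producing an additive term $ik\,(X_\perp\Theta)\,e^{ik\Theta}$; moreover, once you integrate by parts in $\theta$ (which you must, to get rid of the derivative of $\tilde f$), differentiating the phase produces another $ik$-term. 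Neither of these is a harmless $O(d_g)$ perturbation of the $k=0$ kernel: each carries its own potential $1/b$ singularity that has to be analyzed on its own. The vague ``parity/oddness'' claim does not cover them, and incidentally \cite{Pestov2004} does not argue by parity either---it shows directly that $\partial_\theta(a/b)$ vanishes to first order at $t=0$.

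What is actually needed (and what the paper does) is to compute $X_\perp\Theta$ and $\partial_\theta\Theta$ explicitly in terms of the Jacobi scalars $a,b$ (this is Lemma~\ref{lem:alpha}), combine them with the Wronskian identity $a\dot b-\dot a b=1$, and thereby reduce the full $ik$-coefficient to $ik\,(a-1)/b$. Only then can one check that this new piece, like $\partial_\theta(a/b)$, vanishes to first order at $t=0$, making $q_k/b$ smooth. Your sketch jumps over this computation, and as written the conclusion ``$K_k$ extends smoothly'' is not justified.
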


The proof of Prop. \ref{prop:Wk} requires making explicit the kernel of the operator $W_k$, which in turn requires introducing the following two transverse Jacobi fields $X_\perp \gamma = a(\x,\theta,t) \dot\gamma^\perp$ and $\partial_\theta \gamma = b(\x,\theta,t) \dot\gamma^\perp$, where $a,b$ are two scalar functions defined on $\D$ (defined in \eqref{eq:D}), solving the following differential equation on each geodesic
\begin{align*}
    \ddot a + \kappa(\gamma_{\x,\theta}(t)) a = 0 , \qquad a(\x,\theta,0) &= 1, \quad \dot a(\x,\theta,0) = 0, \\
    \ddot b + \kappa(\gamma_{\x,\theta}(t)) b = 0 , \qquad b(\x,\theta,0) &= 0, \quad \dot b(\x,\theta,0) = 1.
\end{align*}
The simplicity of the metric is equivalent to the fact that $b$ never vanishes outside $t=0$. In particular, we have the 
\begin{lemma}\label{lem:changevar}
    If $(M,g)$ is simple and $f$ is a smooth function on $M$, then the following change of variable formula holds for any $x\in M$:
    \begin{align}
	\int_M f(\y)\ dM_\y = \int_0^{2\pi} \int_0^{\tau(\x,\theta)} f(\gamma_{\x,\theta}(t)) b (\x,\theta,t)\ dt\ d\theta.	
	\label{eq:changevar}
    \end{align}
\end{lemma}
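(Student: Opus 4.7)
The plan is to recognize \eqref{eq:changevar} as the change of variables induced by the exponential map at $\x$ expressed in polar coordinates, with $b$ playing the role of the Jacobian.

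First, I would fix $\x \in M$ and introduce the map
\[
    \Phi_\x : \{(\theta, t) : \theta \in \Sm^1,\ 0 < t < \tau(\x,\theta)\} \to M \b \{\x\}, \qquad \Phi_\x(\theta, t) := \gamma_{\x,\theta}(t).
\]
The simplicity of $(M,g)$ is equivalent to the absence of conjugate points and to $\partial M$ being strictly convex, which together imply that $\Phi_\x$ is a diffeomorphism from its domain onto $M \b \{\x\}$ (a set of full measure in $M$). This reduces the proof to computing the Jacobian of $\Phi_\x$.

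Next, I would differentiate $\Phi_\x$ in each coordinate. By definition $\partial_t \Phi_\x(\theta,t) = \dot\gamma_{\x,\theta}(t)$, which is a unit vector since the geodesics are unit-speed. On the other hand, $\partial_\theta \Phi_\x(\theta,t)$ is, by definition, exactly the transverse Jacobi field $\partial_\theta \gamma_{\x,\theta}(t) = b(\x,\theta,t)\,\dot\gamma_{\x,\theta}^\perp(t)$ introduced just before the statement. Since $(\dot\gamma, \dot\gamma^\perp)$ is an orthonormal frame of $T_{\gamma(t)}M$, the absolute value of the Jacobian determinant of $\Phi_\x$ in the basis $(\partial_t, \partial_\theta)$ equals $|b(\x,\theta,t)|$. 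The initial conditions $b(\x,\theta,0)=0$, $\dot b(\x,\theta,0)=1$ give $b(\x,\theta,t) = t + O(t^3)$ for small $t$, so $b>0$ on a neighborhood of $t=0$; simplicity (no conjugate points) propagates this positivity throughout $0 < t < \tau(\x,\theta)$, so that $|b| = b$ on the domain of $\Phi_\x$.

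Finally, applying the standard change of variables formula to $f \circ \Phi_\x$ on the parameter domain, together with the fact that $\{\x\}$ has measure zero, yields
\[
    \int_M f(\y)\, dM_\y = \int_{\Phi_\x^{-1}(M \b \{\x\})} f(\Phi_\x(\theta,t))\, b(\x,\theta,t)\, dt\, d\theta = \int_0^{2\pi}\int_0^{\tau(\x,\theta)} f(\gamma_{\x,\theta}(t))\, b(\x,\theta,t)\, dt\, d\theta,
\]
which is the claimed identity. The only substantive step is the identification $\partial_\theta \Phi_\x = b\,\dot\gamma^\perp$, which is immediate from the definition of the Jacobi field $b$; the rest is bookkeeping enabled by simplicity. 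I expect no significant obstacle beyond being careful with the fact that the map $\Phi_\x$ degenerates at $t=0$, which is harmless since $\{\x\}$ is a null set.
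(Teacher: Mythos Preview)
Your proof is correct and follows essentially the same approach as the paper: both identify \eqref{eq:changevar} as the change of variables under the exponential map at $\x$, with Jacobian given by the Jacobi field $b$. The only cosmetic difference is that the paper carries out the Jacobian computation explicitly in isothermal coordinates (obtaining the Euclidean Jacobian $b\,e^{-2\lambda}$ and then multiplying by the conformal factor $e^{2\lambda}$), whereas you compute it intrinsically via the Riemannian orthonormal frame $(\dot\gamma,\dot\gamma^\perp)$; your treatment of the sign of $b$ and the degeneracy at $t=0$ is slightly more careful than the paper's.
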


\begin{proof}[Proof of Lemma \ref{lem:changevar}] Let $\x\in M$. It is well-known that if the metric is simple, the mapping $(t,\theta)\mapsto \exp_\x(\theta,t) = \gamma_{\x,\theta}(t)$ is a global diffeomorphism mapping $\{(t,\theta): \theta\in [0,2\pi), t\in [0,\tau(\x,\theta))\}$ onto $M$, of which we now compute the jacobian:
    \begin{align*}
	\frac{\partial^2 \exp_\x(t,\theta)}{\partial t \partial\theta} = \det ( \partial_t \gamma_{\x,\theta}(t), \partial_\theta \gamma_{\x,\theta}(t)) = \det ( \dot\gamma, b \dot\gamma^\perp ) = b |\dot\gamma|^2 = b(\x,\theta,t) e^{-2\lambda(\gamma_{\x,\theta}(t))},
    \end{align*}
    where $|\cdot|$ denotes the Euclidean norm so that, since $\gamma$ is unit speed, $e^{2\lambda(\gamma_{\x,\theta}(t))} |\dot\gamma_{\x,\theta}(t)|^2 = 1$. We thus deduce that, using the change of variable $\y = \exp_\x (t,\theta) = \gamma_{\x,\theta}(t)$
    \begin{align*}
	dM_\y = e^{2\lambda(\y)} d^2 \y = e^{2\lambda(\gamma_{\x,\theta}(t))} b(\x,\theta,t) e^{-2\lambda(\gamma_{\x,\theta}(t))}\ dt\ d\theta = b(\x,\theta,t)\ dt\ d\theta,
    \end{align*}
    whence \eqref{eq:changevar}.    
\end{proof}

We now proceed to the proof of Prop. \ref{prop:Wk}.

\begin{proof}[Proof of Proposition \ref{prop:Wk}]
    Let us look at this operator in isothermal coordinates, i.e. assume that $g(\x) = e^{2\lambda(\x)} Id$ and an element $f\in \Omega_k$ can be written as $f(\x,\theta) = \tf(\x) e^{ik\theta}$. We may therefore study the operator $\wtW_k : \C_0^\infty (M) \to \C^\infty (M)$ defined by
    \begin{align*}
	\wtW_k (\tf) = e^{-ik\theta} W_k (\tf e^{ik\theta}), \qquad \tf\in\C_0^\infty(M),
    \end{align*}
    and show that it extends to an operator $\wtW_k:L^2(M)\to \C^\infty(M)$. By definition, 
    \begin{align*}
	\wtW_k (\tf)(\x) = \frac{1}{2\pi} \int_{\Sm^1} e^{-ik\theta} X_\perp \int_0^{\tau(\x,\theta)} \tf (\gamma_{\x,\theta}(t)) e^{ik \alpha_{\x,\theta}(t)}\ dt, 
    \end{align*}
    where we have defined $\alpha_{\x,\theta}(t) := \arg \dot\gamma_{\x,\theta}(t)$. We compute
    \begin{align*}
	2\pi\wtW_k (\tf)(\x) &= \int_{\Sm^1} e^{-ik\theta} X_\perp \int_0^{\tau(\x,\theta)} \tf (\gamma_{\x,\theta}(t)) e^{ik \alpha_{\x,\theta}(t)}\ dt\ d\theta, \\
	&= \int_{\Sm^1} e^{-ik\theta} \int_0^{\tau(\x,\theta)} X_\perp \left( \tf (\gamma_{\x,\theta}(t)) e^{ik \alpha_{\x,\theta}(t)}  \right)\ dt\ d\theta, \\
	&= \int_{\Sm^1} \int_0^{\tau(\x,\theta)} \left( \frac{a}{b} \partial_\theta (f\circ\gamma) + ik (f\circ\gamma) X_\perp \alpha  \right) e^{ik (\alpha_{\x,\theta}(t)-\theta)} \ dt\ d\theta.
    \end{align*}
    Integrating by parts the first term, we arrive at
    \begin{align*}
	\wtW_k (\tf)(\x) = \frac{1}{2\pi} \int_{\Sm^1} \int_0^{\tau(\x,\theta)} q_k (\x,\theta,t) \tf(\gamma_{\x,\theta}(t))\ dt\ d\theta,
    \end{align*}
    where we have defined
    \begin{align}
	q_k (\x,\theta,t) := \left( - \partial_\theta \left( \frac{a}{b} \right) - ik \frac{a}{b} (\partial_\theta \alpha_{\x,\theta}(t) - 1)  + ik X_\perp \alpha_{\x,\theta}(t)  \right) e^{ik (\alpha_{\x,\theta}(t) - \theta)}.
	\label{eq:defqk}
    \end{align}
    In order to simplify $q_k$, we need the following result, whose proof is given at the end of the present proof.
    \begin{lemma}\label{lem:alpha}
	We have, for any $(\x,\theta,t)\in \D$,
	\begin{align*}
	    X_\perp \alpha = \dot{a} - a \partial_t \lambda_\gamma, \qandq \partial_\theta \alpha = \dot{b} - b \partial_t \lambda_\gamma, \quad \text{where} \qquad \lambda_\gamma:= \lambda\circ\gamma.
	\end{align*}
    \end{lemma}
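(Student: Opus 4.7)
My plan is to compute $Y\dot\gamma_{\x,\theta}(t)$ for $Y \in \{X_\perp, V\}$ in two different ways and read off $Y\alpha$ by matching the tangential components in the frame $(\dot\gamma,\dot\gamma^\perp)$. Throughout I will work in the isothermal coordinates of Section~\ref{sec:SM}, in which
\[
\dot\gamma = e^{-\lambda_\gamma}(\cos\alpha,\sin\alpha), \qquad \dot\gamma^\perp = e^{-\lambda_\gamma}(-\sin\alpha,\cos\alpha),
\]
with $\lambda_\gamma := \lambda \circ \gamma$ and $\alpha = \alpha_{\x,\theta}(t)$, so that $\{\dot\gamma, \dot\gamma^\perp\}$ is an orthogonal frame of $T_{\gamma(t)}M$.

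The first computation applies the chain rule directly to the explicit form of $\dot\gamma$, yielding
\[
Y\dot\gamma = -(Y\lambda_\gamma)\,\dot\gamma + (Y\alpha)\,\dot\gamma^\perp,
\]
where $Y\lambda_\gamma = \nabla\lambda(\gamma)\cdot Y\gamma$. The second computation exploits the fact that $X_\perp$ and $V$ act only on the $(\x,\theta)$ variables while $\partial_t$ acts only on $t$, so they commute as derivations on the smooth map $(\x,\theta,t)\mapsto \gamma_{\x,\theta}(t)$; hence
\[
Y\dot\gamma = \partial_t(Y\gamma) = \partial_t(c\,\dot\gamma^\perp), \qquad c := a \text{ if } Y = X_\perp, \quad c := b \text{ if } Y = V,
\]
by the defining identities $X_\perp\gamma = a\dot\gamma^\perp$ and $V\gamma = b\dot\gamma^\perp$.

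Differentiating the isothermal expression for $\dot\gamma^\perp$ in $t$ gives the auxiliary identity $\partial_t\dot\gamma^\perp = -(\partial_t\lambda_\gamma)\,\dot\gamma^\perp - \dot\alpha\,\dot\gamma$, so the second expression expands as $Y\dot\gamma = (\dot c - c\,\partial_t\lambda_\gamma)\,\dot\gamma^\perp - c\,\dot\alpha\,\dot\gamma$. Equating the two expressions for $Y\dot\gamma$ and reading off the $\dot\gamma^\perp$-coefficient yields
\[
Y\alpha = \dot c - c\,\partial_t\lambda_\gamma,
\]
which specialises to the two claimed identities upon substituting $c = a$ and $c = b$ respectively. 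The only delicate point is maintaining correct sign conventions when differentiating $\dot\gamma^\perp$; as a sanity check, matching the $\dot\gamma$-coefficient instead furnishes the pointwise identity $\dot\alpha = \nabla\lambda\cdot\dot\gamma^\perp$, which is the standard form of the angular geodesic equation in isothermal coordinates.
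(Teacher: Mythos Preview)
Your proof is correct. Both you and the paper compute the variation of $\dot\gamma$ with respect to the initial data in two ways and equate; the difference is in the machinery. The paper works with the covariant derivative: it sets up a variation $\Gamma(s,t)$ through geodesics, computes $D_s\partial_t\Gamma$ from the explicit isothermal form of $\partial_t\Gamma$, invokes the symmetry lemma $D_s\partial_t\Gamma = D_t\partial_s\Gamma$, and then takes the Riemannian inner product with $(\partial_t\Gamma)^\perp$ to isolate $S(\alpha)$. You instead stay with ordinary coordinate derivatives throughout: since $Y\in\{X_\perp,V\}$ has coefficients independent of $t$, the commutation $Y\partial_t\gamma = \partial_t Y\gamma$ is just equality of mixed partials, and you read off the $\dot\gamma^\perp$-component directly in the Euclidean frame. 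Your route is more elementary---no connection, no symmetry lemma---at the cost of being tied to the isothermal chart (which is fine here, since the statement is already formulated in those coordinates). The paper's version is closer to a coordinate-free argument and makes the Jacobi-field structure $D_t(c\,\dot\gamma^\perp)$ more visible, but both arrive at the same place.
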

    Using Lemma \ref{lem:alpha}, we can establish that 
    \begin{align*}
	X_\perp \alpha - \frac{a}{b} (\partial_\theta\alpha-1) = \dot{a} - a \partial_t \lambda_\gamma - \frac{a}{b} (\dot{b} - b \partial_t \lambda_\gamma - 1) = (\dot{a} b - \dot{b}a + a)/b = (a-1)/b,
    \end{align*}
    where we have used the fact that $a \dot{b} - b\dot{a} = 1$. Then $q_k(\x,\theta,t)$ simplifies into
    \begin{align}
	q_k (\x,\theta,t) = \left( - \partial_\theta \left( \frac{a}{b} \right) + ik \frac{a-1}{b} \right) e^{ik (\alpha_{\x,\theta}(t) - \theta)}.
	\label{eq:qk}
    \end{align}
    It is established in \cite{Pestov2004} that $\partial_\theta \left( \frac{a}{b} \right)$ has a zero of order 1 at $t=0$ (this is partly what makes $W_0$ a smoothing operator). Further, it is clear that $a-1$ has a zero of order 2 while $b$ has a zero of order 1, so $(a-1)/b$ has a zero of order 1 at $t=0$ as well. Since $b(\x,\theta,t)$ only vanishes at first order at $t=0$ and is smooth on $\D$, the singularity at $t=0$ of the function $\frac{q_k(\x,\theta,t)}{b(\x,\theta,t)}$ is removable, making it a smooth function on $\D$. Therefore, using Lemma \ref{lem:changevar} and denoting $(\theta_\x(\y), t_\x(\y))$ the inverse mapping to the exponential map $(\theta,t)\mapsto \y= \gamma_{\x,\theta}(t)$, we arrive at 
    \begin{align*}
	\wtW_k (\tf) (\x) = \int_M \tf (\y) W_k(\x,\y) \ dM_\y, \where \qquad W_k(\x,\y) := \frac{q_k(\x,\theta_\x(\y), t_\x(\y))}{b(\x,\theta_\x(\y), t_\x(\y))},
    \end{align*}
    is a smooth kernel. The proof is complete.
\end{proof}

\begin{proof}[Proof of lemma \ref{lem:alpha}]
    We need to compute the quantities $X_\perp \alpha_{x,\theta}(t)$ and $\partial_\theta \alpha_{x,\theta}(t)$.
    Let us denote $\Gamma(s,t)$ a variation through geodesics with $\Gamma(0,t) = \gamma(t)$ and longitudinal and transverse fields $T(s,t) = \partial_t\Gamma$ and $S(s,t) = \partial_s\Gamma$ respectively equal to 
    \begin{align*}
	\partial_t \Gamma(s,t) = e^{-2\lambda(\Gamma(s,t))} \hat\alpha(s,t), \quad \hat\alpha := \binom{\cos\alpha}{\sin\alpha}, 
    \end{align*}
    with $T(0,t) = \dot\gamma(t)$ and $S(0,t)$ equal to the variation field (either $X_\perp \gamma$ or $\partial_\theta \gamma$ in our case). The quantity we aim at computing is thus $S[\alpha]|_{s=0}$ (we denote $Y[f] = Y^i \partial_i f$). We compute 
    \begin{align*}
	D_s \partial_t \Gamma &= \partial_s \left( e^{-2\lambda\circ\Gamma} \right) \hat\alpha(s,t) + e^{-2\lambda\circ\Gamma} D_s \hat\alpha \\ 
	&= \partial_s \left( e^{-2\lambda\circ\Gamma} \right) \hat\alpha(s,t) + e^{-2\lambda\circ\Gamma} \left( S(\alpha) \hat\alpha^\perp + \hat\alpha[\lambda]\ S - \hat\alpha^\perp[\lambda]\ S^\perp \right).
    \end{align*}
    By virtue of the symmetry lemma (see e.g. \cite[Lemma 6.3]{Lee1997}), note also that $D_s \partial_t\Gamma = D_t \partial_s \Gamma$. Using this fact and taking the inner product with $(\partial_t\Gamma)^\perp$, we arrive at the relation
    \begin{align*}
	S(\alpha) = \dprod{D_t \partial_s \Gamma}{\partial_t\Gamma^\perp} - \partial_t\Gamma[\lambda] \dprod{S}{\partial_t\Gamma^\perp} + \partial_t\Gamma^\perp [\lambda] \dprod{S^\perp}{\partial_t\Gamma^\perp}.
    \end{align*}
    We now set $s=0$ and, considering for instance the Jacobi field $X_\perp \gamma = a \dot\gamma^\perp$, we get
    \begin{align*}
	X_\perp \alpha = \dprod{D_t (a \dot\gamma_\perp)}{\dot\gamma^\perp} - \partial_t (\lambda\circ\gamma) \dprod{a \dot\gamma^\perp}{\dot\gamma^\perp} + \dot\gamma^\perp [\lambda] \dprod{-a\dot\gamma}{\dot\gamma^\perp} = \dot{a} - a \partial_t(\lambda\circ\gamma). 
    \end{align*}
    Similarly, considering the Jacobi field $\partial_\theta \gamma = b\dot\gamma^\perp$, we arrive at $\partial_\theta \alpha = \dot{b} - b \partial_t(\lambda\circ\gamma)$.
\end{proof}

Next we establish that $W_k$ and $W_k^\star$ are adjoint for the $L^2(SM)$ inner product structure on $H_k$. For that we will need the following

\begin{lemma}[Lemma 5.2 in \cite{Pestov2004}]\label{lem:oddeven}
    For any even function $f\in L^2(SM)$ and odd function $h\in L^2(SM)$, 
    \begin{align*}
	(If,Ih)_{L^2_\mu(\partial_+ SM)} = 0.
    \end{align*}
\end{lemma}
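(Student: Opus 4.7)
The plan is to exploit the fact that the ray transform has definite parity with respect to a measure-preserving involution of $\partial_+ SM$ coming from time-reversal of geodesics, and to deduce orthogonality of two subspaces on which this involution acts with opposite signs.

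First, I would introduce the involution $A:\partial_+ SM \to \partial_+ SM$ defined by $A(\x,\theta) := (y,w+\pi)$, where $(y,w) = \psi(\x,\theta) \in \partial_- SM$ is the exit point and exit direction of the geodesic $\gamma_{\x,\theta}$. Equivalently, $A$ first applies the scattering relation and then the flip $\theta \mapsto \theta + \pi$. The geodesic issued from $A(\x,\theta)$ is the time-reversal of $\gamma_{\x,\theta}$: parameterizing its lift by $s \in [0,\tau(\x,\theta)]$ one has $\varphi_s(A(\x,\theta)) = (\gamma_{\x,\theta}(\tau-s), -\dot\gamma_{\x,\theta}(\tau-s))$, where $\tau := \tau(\x,\theta)$. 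Clearly $A^2 = \mathrm{id}$.

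Next I would record two key properties of $A$. First, $A$ preserves the measure $d\mu = |v\cdot\nu_x|\, d(\partial M)\, d\theta$ on $\partial_+ SM$. This follows because the Liouville measure on $SM$ is invariant under the geodesic flow, which yields (via Santalo's formula or the standard argument in Sharafutdinov's book) that the scattering relation $\psi:\partial_+ SM \to \partial_- SM$ maps $\mu$ to the analogous measure on $\partial_- SM$, and the involution $\theta\mapsto \theta+\pi$ is plainly an isometry swapping the two boundary pieces while preserving the measure. Second, $A$ transforms the ray transform according to the parity of its argument: for $f$ even, the change of variable $s \mapsto \tau - s$ gives
\begin{align*}
    If(A(\x,\theta)) = \int_0^\tau f(\gamma_{\x,\theta}(\tau-s), -\dot\gamma_{\x,\theta}(\tau-s))\, ds = \int_0^\tau f(\varphi_t(\x,\theta))\, dt = If(\x,\theta),
\end{align*}
and for $h$ odd the analogous computation yields $Ih \circ A = -Ih$.

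Combining these two facts, the measure-preserving change of variable $w \mapsto Aw$ in the inner product gives
\begin{align*}
    (If, Ih)_{L^2_\mu(\partial_+ SM)} = \int_{\partial_+ SM} (If)(Aw)\, \overline{(Ih)(Aw)}\, d\mu(w) = -\int_{\partial_+ SM} If \cdot \overline{Ih}\, d\mu = -(If, Ih)_{L^2_\mu(\partial_+ SM)},
\end{align*}
hence $(If,Ih)_{L^2_\mu(\partial_+ SM)} = 0$, as desired. The only nontrivial ingredient is the measure-invariance of $A$, which is standard but the place I would be most careful in a fully rigorous write-up; everything else reduces to the time-reversal symmetry of geodesic integration.
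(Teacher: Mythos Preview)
The paper does not supply its own proof of this lemma; it is quoted verbatim as Lemma 5.2 of Pestov--Uhlmann \cite{Pestov2004} and used as a black box in the proof of Proposition~\ref{prop:adjoint_k}. Your argument is correct and is precisely the standard one: the time-reversal involution $A$ on $\partial_+ SM$ preserves $d\mu$ (by Liouville invariance of the geodesic flow combined with the antipodal flip), and $I$ intertwines parity on $SM$ with parity under $A$, forcing the inner product to equal its own negative. This is essentially the proof given in the cited reference, so there is nothing to compare.
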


\begin{proposition}\label{prop:adjoint_k}
    $W_k^\star$ is the adjoint of $W_k$ in $H_k \subset L^2(SM)$.
\end{proposition}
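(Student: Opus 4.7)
The plan is to establish the adjoint identity $(W_k f, h)_{L^2(SM)} = (f, W_k^\star h)_{L^2(SM)}$ for smooth $f \in \Omega_k$ and $h \in \Omega_k$ with $h|_{\partial M}=0$ by combining two integration-by-parts steps, one against $X_\perp$ and one against $X$, with Lemma \ref{lem:oddeven} invoked at the very end to annihilate a residual boundary integral.

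First, I will use that $h \in H_k$ to drop the $k$-th Fourier projection in the definition of $W_k$, reducing the left-hand side to $(X_\perp u^f, h)_{L^2(SM)}$. Since $X_\perp$ is volume preserving on $SM$, the usual integration by parts on a compact manifold with boundary produces a boundary contribution of the form $\int_{\partial SM} u^f h \dprod{X_\perp}{n} d\sigma$, which vanishes immediately from the Dirichlet condition $h|_{\partial M}=0$ (since $\partial SM = \partial M \times \Sm^1$). This yields the first intermediate identity
\[
(W_k f, h)_{L^2(SM)} = -(u^f, X_\perp h)_{L^2(SM)}.
\]

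Next, I will integrate $X$ by parts against the pair $(u^f, u^{X_\perp h})$. Using the transport equations $Xu^f = -f$ and $Xu^{X_\perp h} = -X_\perp h$, together with $u^f|_{\partial_- SM} = u^{X_\perp h}|_{\partial_- SM} = 0$, this produces
\[
-(f, u^{X_\perp h})_{L^2(SM)} - (u^f, X_\perp h)_{L^2(SM)} = -(If, I[X_\perp h])_{L^2_\mu(\partial_+ SM)}.
\]
The key observation is that the right-hand side vanishes: $f \in \Omega_k$ sits entirely in Fourier modes of parity $\sigma_k$, while $X_\perp h$, because $X_\perp$ maps $\Omega_k$ into $\Omega_{k-1}\oplus\Omega_{k+1}$, lies in modes of opposite parity. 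Lemma \ref{lem:oddeven} then forces the boundary pairing to be zero. Combining with the first step and reinserting the $k$-th Fourier projection (legitimate since $f \in H_k$) converts $(f, u^{X_\perp h})_{L^2(SM)}$ into $(f, (u^{X_\perp h})_k)_{L^2(SM)} = (f, W_k^\star h)_{L^2(SM)}$, closing the chain.

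The one step that deserves care is the accounting of boundary terms in each integration by parts; the first is suppressed by the Dirichlet condition on $h$, and the second by the combined effect of $u^f, u^{X_\perp h}$ vanishing on $\partial_- SM$ together with the parity-based cancellation on $\partial_+ SM$. No other ingredient is needed beyond the commutator/mapping properties of $\eta_\pm$ and the definitions of $W_k$, $W_k^\star$, and the argument is a direct generalization of the $k=0$ computation in \cite{Pestov2004}.
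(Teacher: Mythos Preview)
Your proof is correct and follows essentially the same route as the paper: two integrations by parts (one against $X_\perp$, one against $X$), with the $X_\perp$ boundary term killed by $h|_{\partial M}=0$ and the $X$ boundary term killed by Lemma~\ref{lem:oddeven} applied to the even/odd pair $f$ and $X_\perp h$. The only cosmetic difference is that the paper starts from $(f, W_k^\star h)$ and works toward $(W_k f, h)$, performing the $X$ integration by parts first and the $X_\perp$ integration by parts second, i.e.\ your chain of identities run in reverse.
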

The proof is very similar to the case $k=0$ proved in \cite[Proposition 5.3]{Pestov2004}. 

\begin{proof}
    It is enough to establish that 
    \begin{align*}
	(f,W_k^\star h)_{L^2(SM)} = (W_k f, h)_{L^2(SM)},
    \end{align*}
    for $f(x,\theta) = \tilde{f}(x) e^{ik\theta}$ and $h(x,\theta) = \tilde{h}(x) e^{ik\theta}$ for some $\tilde f, \tilde h \in \C_0^\infty(M)$. Doing so, we write
    \begin{align*}
	(f, W_k^\star h)_{L^2(SM)} &= \int_{SM} f (u^{X_\perp h})_k\ d\Sigma^3 = \int_{SM} f u^{X_\perp h}\ d\Sigma^3 = - \int_{SM} X(u^f) u^{X_\perp h}\ d\Sigma^3 \\
	&= - (If, I(X_\perp h))_{L^2_\mu(\partial_+ SM)} + \int_{SM} u^f X(u^{X_\perp h})\ d\Sigma^3.
    \end{align*}
    The first term in the right-hand side is zero by virtue of Lemma \ref{lem:oddeven} and working on the second term, we obtain
    \begin{align*}
	(f, W_k^\star h)_{L^2(SM)} = - \int_{SM} u^f X_\perp h\ d\Sigma^3 = \int_{SM} (X_\perp u^f) h\ d\Sigma^3 = \int_{SM} (X_\perp u^f)_k h\ d\Sigma^3,
    \end{align*}
    where the integration by parts has no boundary term since $h|_{\partial SM} = 0$. The proof is complete.
\end{proof}

\paragraph{Constant curvature cases.}
In the case of constant curvature, the functions $a,b$ do not depend on $x$ nor $\theta$ (in particular $\partial_\theta (a/b) = 0$), and they take the following values
\begin{align*}
    \kappa = 0 &: \quad a(t) \equiv 1, \quad b(t) = t, \\ 
    \kappa = -p^2 &:\quad a(t) = \cosh (pt), \quad b(t) = \sinh (pt), \\
    \kappa = p^2 &:\quad a(t) = \cos (pt), \quad b(t) = \sin(pt), 
\end{align*}
hence the respective kernels, obtained after using trigonometric identities:
\begin{align*}
    \kappa = 0 &: \quad q_k /b = 0, \\ 
    \kappa = -p^2 &:\quad q_k /b = ik (1+\cosh(pt))^{-1} e^{ik(\alpha_{\x,\theta}(t)-\theta)}, \\
    \kappa = p^2 &:\quad q_k /b = -ik (1 + \cos(pt))^{-1} e^{ik(\alpha_{\x,\theta}(t)-\theta)}. 
\end{align*}
In the Euclidean case $\kappa=0$, reconstruction formulas \eqref{eq:fkrc} and \eqref{eq:hkrc} are {\em exact}. However for $k\ne 0$, the operators $W_k$ and $W_k^\star$ no longer vanish in the nonzero constant curvature case. 

We now establish that the operators $W_k$ become contractions in $\L(H_k)$ norm whenever the metric is $\C^3$-close to Euclidean. This generalizes a result by Krishnan in \cite{Krishnan2010} for the case $k=0$ and metrics close to constant curvature. 

\begin{proposition}\label{prop:Wkest} Let $(M,g)$ a simple Riemannian manifold with the corresponding family of operators $W_k$ as defined above. Then there exist two constants $C_1,C_2>0$ such that for any integer $k$, the following estimate holds
    \begin{align}
	\|W_k\|_{\L(H_k)} \le C_1 \|\nabla \kappa\|_\infty + |k|\ C_2 \|\kappa\|_\infty.
	\label{eq:Wkest}
    \end{align}
\end{proposition}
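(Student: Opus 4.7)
The plan is to exploit the explicit Schwartz kernel of $\wtW_k := e^{-ik\theta} W_k e^{ik\theta}$ derived in the proof of Proposition \ref{prop:Wk}, namely
\begin{align*}
\wtW_k \tf(\x) = \frac{1}{2\pi}\int_{\Sm^1}\int_0^{\tau(\x,\theta)} q_k(\x,\theta,t)\, \tf(\gamma_{\x,\theta}(t))\, dt\, d\theta, \quad q_k = \left(-\partial_\theta(a/b) + ik(a-1)/b\right) e^{ik(\alpha-\theta)},
\end{align*}
and to apply Schur's test on $L^2(M)$. By the change of variables \eqref{eq:changevar} applied to the Schur integrals (and its symmetric counterpart obtained by reversing geodesics, legitimate on a simple surface), both $L^1$-integrals reduce to a bound of the form
\begin{align*}
\sup_{\x\in M}\int_{\Sm^1}\int_0^{\tau(\x,\theta)}\left(\big|\partial_\theta(a/b)\big| + |k|\,\big|(a-1)/b\big|\right) dt\, d\theta.
\end{align*}
Since $(M,g)$ is simple, $\tau$ is uniformly bounded and $b(\x,\theta,t)/t$ is bounded away from $0$ on $\overline{\D}$, so the task reduces to controlling the two integrands pointwise on $\overline{\D}$ by $\|\nabla\kappa\|_\infty$ and $\|\kappa\|_\infty$ respectively.

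For the term $(a-1)/b$, I would use that $\tilde a := a-1$ satisfies $\ddot{\tilde a} + \kappa(\gamma)\tilde a = -\kappa(\gamma)$ with $\tilde a(0) = \dot{\tilde a}(0) = 0$; integrating twice and invoking the uniform $L^\infty$ bound on $a$ over $\overline{\D}$ gives $|\tilde a(\x,\theta,t)|\le C\|\kappa\|_\infty\, t^2$ pointwise, with $C$ depending only on $(M,g)$. Combined with $b\ge c_0 t$ on $\overline{\D}$, this yields $|(a-1)/b| \le C\|\kappa\|_\infty\, t$, whose integral over $[0,\tau]\times\Sm^1$ produces the $|k|\, C_2\,\|\kappa\|_\infty$ contribution.

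The term $\partial_\theta(a/b) = (a'b - ab')/b^2$ (with $a'=\partial_\theta a$, $b' = \partial_\theta b$) is handled by differentiating the Jacobi equations in $\theta$: since $\partial_\theta\gamma = b\dot\gamma^\perp$, one has $\partial_\theta(\kappa\circ\gamma) = b\,(\nabla\kappa)(\gamma)\cdot\dot\gamma^\perp$, so the linear ODE satisfied by $a',b'$ carry source terms controlled by $b\|\nabla\kappa\|_\infty$. Combined with the initial data $a'(0)=b'(0)=\dot a'(0) = \dot b'(0)=0$, obtained by differentiating the fixed initial conditions of $a,b$ in $\theta$, a Gronwall argument then yields $|a'|,|b'|\le C\|\nabla\kappa\|_\infty\, t^3$ on $\overline{\D}$. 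This $t^3$ vanishing is precisely what cancels the $1/b^2$ singularity in $(a'b-ab')/b^2$, leaving a quantity bounded on $\overline{\D}$ by $C\|\nabla\kappa\|_\infty$ and producing the $C_1\|\nabla\kappa\|_\infty$ contribution.

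Assembling the two bounds via Schur's test yields \eqref{eq:Wkest}, with constants $C_1, C_2$ depending on $(M,g)$ but uniform in $k\in\Zm$. The main technical obstacle is the careful bookkeeping of vanishing orders at $t=0$ needed to turn the $1/b^2$ factor in $\partial_\theta(a/b)$ into a bounded quantity controlled linearly by $\|\nabla\kappa\|_\infty$; beyond that, the linearity in $|k|$ of the second contribution follows directly from the algebraic structure of $q_k$, which is why the estimate automatically decouples into a curvature-gradient piece and a $|k|$-scaled curvature piece.
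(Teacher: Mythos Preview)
Your argument is correct and reaches the same estimate, but the route differs from the paper's in two respects worth recording.

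\textbf{Operator bound.} The paper does not use Schur's test. Instead, for each fixed $\x$ it writes $q_k\tilde f_\gamma = (q_k/\sqrt b)(\tilde f_\gamma\sqrt b)$, applies Cauchy--Schwarz in $(t,\theta)$, and uses the change-of-variable identity $\int_{\Sm^1}\int_0^{\tau}\tilde f^2(\gamma)b\,dt\,d\theta=\|\tilde f\|_{L^2(M)}^2$ to absorb the second factor. This yields
\[
\|\wtW_k\|_{\L(L^2)}^2\le \frac{1}{(2\pi)^2}\int_M\int_{\Sm^1}\int_0^{\tau}\frac{|q_k|^2}{b}\,dt\,d\theta\,dM_\x,
\]
so the paper must bound $|q_k|/b$ pointwise, i.e.\ the stronger quantity $(a-1)/b^2$ rather than your $(a-1)/b$. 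Your Schur approach is equally valid; the ``second'' Schur integral does reduce to the first via geodesic reversal because $b(\x,\theta_\x(\y),t)=b(\y,\theta_\y(\x),t)$ on a simple surface (a Wronskian computation), so the symmetry you invoke is legitimate. The trade-off: the paper's one-sided argument avoids that symmetry step, at the price of needing one extra power of $b$ in the denominator.

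\textbf{Pointwise bounds.} For $\partial_\theta(a/b)$ the paper cites Krishnan's ODE argument for $b\partial_\theta a-a\partial_\theta b$; your direct derivation via the differentiated Jacobi equations (source $\partial_\theta\kappa_\gamma=b\,\dot\gamma^\perp\!\cdot\!\nabla\kappa$, zero initial data for $a',b'$, Duhamel with Green kernel $a(t)b(s)-a(s)b(t)=O(t-s)$ and $b(s)=O(s)$) is a clean self-contained alternative giving the same $C\|\nabla\kappa\|_\infty$ control. For $(a-1)/b^2$ the paper writes $a-1$ via the fundamental matrix as $\int_0^t(a(t)b(s)-a(s)b(t))\kappa_\gamma(s)\,ds$ and rescales $s=tu$; your double-integration of the ODE achieves the same $|a-1|\le C\|\kappa\|_\infty t^2$ with less machinery, which suffices because Schur only asks for $(a-1)/b$.

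In short: same skeleton (explicit $q_k$, Jacobi-field estimates linear in $\|\kappa\|_\infty$ and $\|\nabla\kappa\|_\infty$), different $L^2$ packaging.
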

 
\begin{proof} Since $(M,g)$ is simple, note that applying Lemma \ref{lem:changevar} to $f^2$, we have that for every $\x\in M$,
    \begin{align}
	\int_M f^2(\y) \ dM_\y = \int_{0}^{2\pi} \int_0^{\tau(\x,\theta)} f^2(\gamma_{\x,\theta}(t)) b (\x,\theta,t)\ dt\ d\theta
	\label{eq:L2id}
    \end{align}
    Therefore, we may bound formally
    \begin{align*}
	2\pi|\widetilde{W_k}(\tilde f)(\x)| &= \left| \int_{0}^{2\pi}\int_0^{\tau(\x,\theta)} q_k(\x,\theta,t) \tilde{f}(\gamma_{\x,\theta}(t))\ dt\ d\theta \right| \\
	&= \left| \int_{0}^{2\pi} \int_0^{\tau(\x,\theta)} \left( \frac{q_k}{\sqrt{b}} \right)\left( \tilde{f}_\gamma \sqrt{b} \right)\ dt\ d\theta \right | \\
	&\le \sqrt{\int_{0}^{2\pi} \int_0^{\tau(\x,\theta)} \frac{|q_k|^2}{b^2} b\ dt\ d\theta } \sqrt{\int_{0}^{2\pi} \int_0^{\tau(\x,\theta)}  \tilde{f}^2_\gamma\ b\ dt\ d\theta}.
    \end{align*}
    Squaring, integrating over $\x\in M$ and using \eqref{eq:L2id}, we arrive at the relation
    \begin{align*}
	\frac{\|\widetilde{W}_k(\tilde f)\|_{L^2(M)}^2}{\|\tilde{f}\|_{L^2(M)}^2} \le \frac{1}{(2\pi)^2} \int_M \int_{0}^{2\pi} \int_0^{\tau(\x,\theta)} \frac{|q_k|^2}{b^2}(\x,\theta,t) b(\x,\theta,t)\ dt\ d\theta\ dM_\x . 
    \end{align*}
    Given expression \eqref{eq:qk}, we have 
    \begin{align*}
	\frac{|q_k|^2}{b^2} = \left(\frac{1}{b} \partial_\theta \frac{a}{b} \right)^2 + k^2 \left( \frac{a-1}{b^2} \right)^2.
    \end{align*}
    By writing an ODE in $t$ for the function $b\partial_\theta a - a\partial_\theta b$, it is proved in \cite{Krishnan2010} that the function $|\frac{1}{t}\partial_\theta(a/b)|$ is uniformly bounded by $C \|\nabla\kappa\|_\infty$ for some positive constant $C$. Since $\frac{b}{t}$ is smooth over the compact domain $\overline{\D}$ (defined in \eqref{eq:D}), non-vanishing by virtue of simplicity, and with $\lim_{t\to 0} \frac{b}{t} = 1$, we may then deduce that $|\frac{1}{b}\partial_\theta(a/b)| \le C \|\nabla \kappa\|_\infty$ for some constant $C$, which in turn proves the first term in the right-hand side of \eqref{eq:Wkest}. Although it would be enough to work with $L^2$ estimates, we will complete the proof by showing a uniform estimate of the form 
    \begin{align}
	\left| \frac{a-1}{b^2}\right| \le C' \|\kappa\|_\infty, \quad (x,\theta,t) \in \D.
	\label{eq:estam1b}
    \end{align}
    
    {\bf Bound on $\frac{a-1}{b^2}$ (proof of \eqref{eq:estam1b}):} The function $a-1$ satisfies the ODE 
    \begin{align*}
	\ddot{ \overline{a-1}} + \kappa_\gamma (a-1) = -\kappa_\gamma , \quad (a-1)(0) = \dot{\overline{(a-1)}}(0) = 0, 
    \end{align*}
    so if $\Phi = \left[\begin{smallmatrix} a & b \\ \dot{a} & \dot{b} \end{smallmatrix}\right]$ is the fundamental matrix solution of $\ddot\Phi + \kappa_\gamma \Phi = 0$ with $\Phi(0) = \left[\begin{smallmatrix} 1 & 0 \\ 0 & 1 \end{smallmatrix}\right]$ (note that $\det \Phi(t) = 1$ for every $t\ge 0$), we have that 
    \begin{align*}
	(a-1)(t) = \left[
	\begin{array}{cc}
	    1 & 0
	\end{array}
    \right] \Phi(t) \int_0^t \Phi(s)^{-1} \left[
	    \begin{array}{c}
		0 \\ -\kappa_\gamma(s) 
	    \end{array}
	\right] \ ds = \int_0^t (a(t)b(s) - a(s) b(t)) \kappa_\gamma(s)\ ds, 
    \end{align*}
    so that we have
    \begin{align*}
	\frac{a-1}{b^2}(t) = \frac{t}{b(t)}\int_0^1 \frac{a(t)b(tu) - a(tu)b(t)}{b(t)} \kappa_\gamma(tu)\ du,
    \end{align*}
    where the function $F_{\x,\theta}(t,u) := \frac{a(t)b(tu) - a(tu)b(t)}{b(t)} \to u-1$ as $t\to 0$ and smooth w.r.t. all its arguments, thus uniformly bounded on $\overline{\D}\times [0,1]$. With $t/b(t)$ bounded over $\D$ as well, we deduce  estimate \eqref{eq:estam1b}, and the proof is complete.    
\end{proof}

As an obvious corollary, we have
\begin{corollary}\label{cor:contraction}
    For any fixed $k\in \Zm$ there exists a constant $C_k$ such that 
    \begin{align}
	\|\kappa\|_\infty + \|\nabla\kappa\|_\infty \le C_k,
	\label{eq:curvcond}
    \end{align}
    implies that the operators $W_k, W_k$ are contractions in $H_k$. In this case, the operators $I + W_k^2$ and $I+ W_k^{\star 2}$ are invertible elements of $\L(H_k)$. 
\end{corollary}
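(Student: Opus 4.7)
The plan is to read off the corollary directly from the norm estimate \eqref{eq:Wkest} of Proposition \ref{prop:Wkest} together with the adjoint identification of Proposition \ref{prop:adjoint_k}, and then invoke the Neumann series for contractions on a Hilbert space.

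First I would fix $k\in\Zm$ and choose $C_k$ small enough that $C_1 C_k + |k|\,C_2 C_k < 1$; concretely $C_k := \tfrac{1}{2(C_1 + |k|\,C_2)}$ works. Under the hypothesis \eqref{eq:curvcond}, estimate \eqref{eq:Wkest} then gives $\|W_k\|_{\L(H_k)} \le C_1\|\nabla\kappa\|_\infty + |k|\,C_2\|\kappa\|_\infty \le (C_1 + |k|\,C_2)(\|\kappa\|_\infty + \|\nabla\kappa\|_\infty) \le \tfrac12$, so $W_k$ is a strict contraction in $\L(H_k)$. Since $W_k^\star$ is the Hilbert-space adjoint of $W_k$ on $H_k$ by Proposition \ref{prop:adjoint_k}, it has the same operator norm, so $W_k^\star$ is a contraction as well.

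Next, I would square: $\|W_k^2\|_{\L(H_k)} \le \|W_k\|_{\L(H_k)}^2 < 1$, and likewise for $W_k^\star$. The standard Neumann series argument then shows that $I + W_k^2$ and $I + (W_k^\star)^2$ are invertible in $\L(H_k)$, with inverses given by $\sum_{n\ge 0} (-1)^n W_k^{2n}$ and $\sum_{n\ge 0} (-1)^n (W_k^\star)^{2n}$ respectively, converging absolutely in operator norm.

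There is no real obstacle here since Proposition \ref{prop:Wkest} already carries all the analytic content; the only thing to be a little careful about is that the constant $C_k$ depends on $k$ through the factor $|k|\,C_2$, which is why the smallness condition on curvature becomes more stringent as $|k|$ grows, consistent with the discussion preceding the statement.
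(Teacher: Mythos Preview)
Your argument is correct and matches the paper's approach exactly: the paper treats this as an ``obvious corollary'' of Proposition~\ref{prop:Wkest}, invoking the Neumann series in the line immediately following the statement, and your proof simply spells out the choice of $C_k$ and the use of Proposition~\ref{prop:adjoint_k} to transfer the contraction bound to $W_k^\star$.
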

For metrics with curvature satisfying \eqref{eq:curvcond}, the operators $I+W_k^2$ and $I+(W_k^\star)^2$ become invertible via Neumann series, and equations \eqref{eq:fkrc} and \eqref{eq:hkrc} become {\em exact} reconstruction formulas
\begin{align}
    f = - \sum_{p=0}^\infty (-W_k^2)^p (X_\perp w_\psi^{(f)})_k, \qandq  h = - \sum_{p=0}^\infty (-W_k^{\star2})^p (w_\psi^{(h)})_k. \label{eq:neumanns}
\end{align}

\section{Numerical simulations} \label{sec:numerics}

\subsection{Introduction}

The purpose of this section is twofold:
\begin{itemize}
    \item[$(i)$] Validate the exactness of inversion formulas \eqref{eq:neumanns} in some cases of simple and close-to-simple manifolds. 
    \item[$(ii)$] Illustrate the fact that in the case of constant curvature, unlike the case $k=0$ where the error operators $W_0, W_0^\star$ vanish identically, they no longer do for $k\ne 0$ and if curvature is too large, the Neumann series can diverge. 
\end{itemize}

Least-squares methods for the reconstruction of vector fields from knowledge of their ray transform and additional information (their transverse ray transform) may be found in \cite{Svetov2013}. Reconstruction of solenoidal tensor fields using bases of solenoidal polynomials and a least-square method was investigated in \cite{Derevtsov2005}. Here we use the code recently developped in {\tt MatLab} by the author and presented in \cite{Monard2013}, based on discretizing explicitely the integral and differential operators involved. 
\medskip

\noindent{\bf Formulation and approach.} In both problems presented, the unknown tensor always depends on one {\em scalar function} $f$, as it is either of the form $f(x) e^{ik\theta}$ or $X_\perp (f(x) e^{ik\theta})$. For visualization purposes, this is what we reconstruct numerically. It is customary to introduce the notation $I_k f := I [f e^{ik\theta}]$ and $I_{k,\perp} f = I [X_\perp (f e^{ik\theta})]$, and the problems we tackle numerically, equivalent to the ones treated in the sections above, is to reconstruct the function $f$ from either data $I_k f$ or $I_{k,\perp} f$, with $k$ some fixed integer determining the order of the initial tensor. If we introduce the operators 
\begin{align*}
    \wtW_k (f) = e^{-ik\theta} W_k (f e^{ik\theta}) \qandq \wtW_k^\star (f) = e^{-ik\theta} W_k^\star (f e^{ik\theta}), \quad f\in L^2(M),
\end{align*}
then both operators are $L^2(M)$-adjoint of one another and smoothing when the metric is simple. The series in \eqref{eq:neumanns} can now be written in the sense of {\em functions} (and not elements of $\Omega_k$) as
\begin{align}
f &= - \sum_{p=0}^\infty (-\wtW_k^2)^p \widetilde{(X_\perp w_\psi^{(f)})_{k}}, \quad w^{(f)} := H_{(k)} (I_k f)_{-\sigma_k} |_{\partial_+ SM} \label{eq:fhrc} \\
h &= - \sum_{p=0}^\infty (-\wtW_k^{\star2})^p \widetilde{(w_\psi^{(h)})_{k}}, \quad w^{(h)} := H_{(k)} (I_{k,\perp} h)_{\sigma_k}|_{\partial_+ SM}. \label{eq:hhrc}
\end{align}
Formally, both series \eqref{eq:fhrc} and \eqref{eq:hhrc} come from equations of the form
\begin{align*}
    f - \K f = AIf,
\end{align*}
with $f$ the unknown, $I$ the forward map, $A$ an approximate inversion (parametrix) and $\K$ an ``error'' operator. Since $\K$ can be expressed as $\K f = f - AIf$, whenever the equation above is invertible via a Neumann series, one may write 
\begin{align*}
    f = \sum_{p=0}^\infty \K^p AIf = \sum_{p=0}^\infty (Id - AI)^p AIf.    
\end{align*}
Therefore, such interative formulas only require implementing the forward map $I$ and the approximate reconstruction formula $A$ in order to be computed, and they do not require computing the error operators (for the case of formulas \eqref{eq:fhrc}, they are $-\widetilde{W_k}^2$ and its adjoint). Such an approach is systematic of explicit inversion formulas modulo contractive error operators, see e.g. \cite{Monard2013,Qian2011}.

Although it is not clear how to quantify the control over curvature (which ensures convergence of the series) in Corollary \ref{cor:contraction}, we will see that the series successfully converges for a one-parameter family of metrics which can reach the limit of simplicity. In the sections below, we only briefly recall the approach to focus on the qualitative numerical behavior. The interested reader may find a more detailed explanation of the method in \cite{Monard2013}.

Though the code developped in \cite{Monard2013} can handle more general boundaries, we restrict the computational domain to be the unit disc here. We will show the outcome of reconstruction algorithms inverting for a real-valued function $f(x,y)$ (the phantom visualized in Fig. \ref{fig:phantom2}) from knowledge of either $I_k f$ or $I_{k,\perp} f$. The metrics considered will be of constant negative and positive curvature in section \ref{ssec:constcurv}, then we will use in section \ref{ssec:oneparam} a one-parameter family of metrics which transitions continuously from simple to non-simple to trapping. 

\begin{figure}[htpb]
    \centering 
    \includegraphics[trim = 30 18 30 30, clip, width=0.228\textwidth]{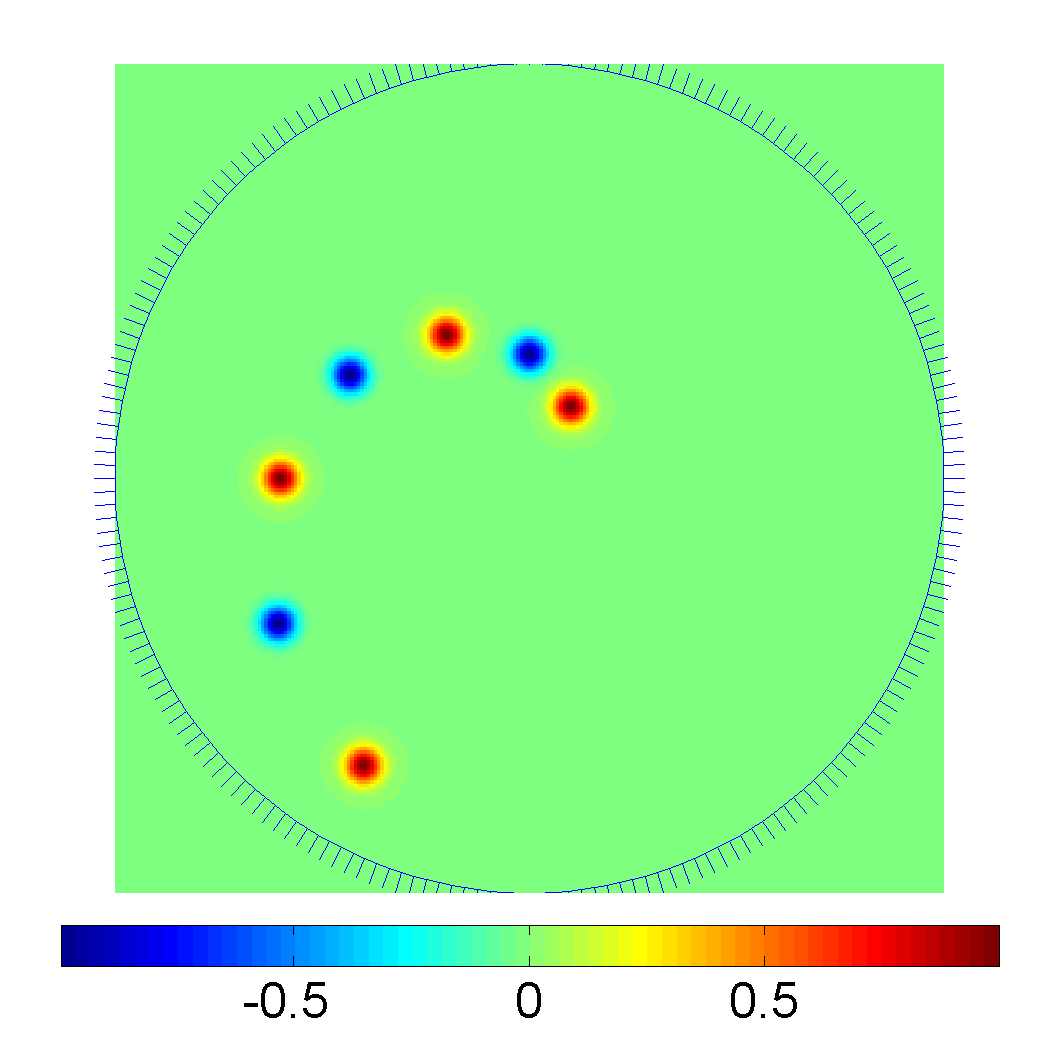}
    \caption{Phantom $f$ embedded into the computational domain (unit disc $\{x^2 + y^2 < 1\}$).}
    \label{fig:phantom2}
\end{figure}

\subsection{Implementation}

\subsubsection{Implementation of the forward operators}
The forward operators $f \mapsto I_k f:= I(f e^{ik\theta})$ and $f\mapsto I_{k,\perp} f := I(X_\perp (f e^{ik\theta}))$ are defined on the influx boundary $\partial_+ SM$ of the domain, parameterized by $(\beta,\alpha) \in [0,2\pi]\times \left[ -\frac{\pi}{2}, \frac{\pi}{2} \right]$, where $\beta$ parameterizes the position of the boundary point $\x(\beta) = (x(\beta),y(\beta))^T$ from where to shoot the geodesic, and $\alpha$ denotes the angle of initial speed direction, ranging from $\frac{-\pi}{2}$ to $\frac{\pi}{2}$ with respect to the inner normal at $\x(\beta)$. In the present situation where the domain is the unit disc, it coincides with the so-called {\em fan-beam coordinates}, where we have $\x(\beta) = \binom{\cos\beta}{\sin\beta}$ and the inner normal there has angle $\beta + \pi$ with respect to the $x$-axis. Computing the forward operators  consists of the following steps. 
\begin{enumerate}
    \item Discretizing $\partial_+ SM$ appropriately. Here we will choose $2n\times n$ equispaced points in $[0,2\pi]\times [-\frac{\pi}{2},\frac{\pi}{2}]$, where $n$ is the sidelength of the reconstruction grid.
    \item For each boundary point $(\beta,\alpha)$ in this discretization, compute the corresponding geodesic $\{ (\gamma_{\beta,\alpha},\dot{\gamma}_{\beta,\alpha})(t),\ 0\le t\le \tau(\beta,\alpha) \}$ by solving numerically the geodesic system 	
	\begin{align*}
	    \dot \x(t) = e^{-\lambda(\x(t))}\ \hat\theta(t), \qquad \dot\theta(t) = e^{-\lambda(\x(t))}\ \hat\theta(t)^\perp\cdot\nabla \lambda(\x(t)),
	\end{align*}
	with initial conditions
	\begin{align*}
	    \x(0) = \hat\beta, \quad \theta(0) = \beta + \pi + \alpha. 
	\end{align*}	
	This is done by marching forward in time with stepsize $\Delta t$, until the geodesic exits the domain. The exit test raises a flag whenever $x(t)^2 + y(t)^2 \ge 1$. The outcome of this procedure is a collection of points of the form $(\x_{\beta,\alpha}^p, \theta_{\beta,\alpha}^p)_{p=1}^N$. It is sufficient to take $N$ as an integer larger than $\diam(M)/\Delta t$. The metric and its partial derivatives are defined by analytic expressions so that there is no particular underlying Eulerian grid in the forward problem. 
    \item Using the computed geodesics, compute $I_k f$ and $I_{k,\perp} f$ by the following quadratures (for clarity, we make the $(\beta,\alpha)$-dependence of $(\x^p,\theta^p)_{p=1}^N$ implicit here)
	\begin{align*}
	    I_k f(\beta,\alpha) &\approx \Delta t\sum_{p=1}^N f(\x^p) e^{ik \theta^p}, \\ 
	    I_{k,\perp} f (\beta,\alpha) &\approx \Delta t\sum_{p=1}^N e^{-\lambda(\x^p)} e^{ik \theta^p} \left( \frac{f(\x^{p,+}) - f(\x^{p,-})}{2\Delta t} + ik\ \widehat{\theta^p}\cdot\nabla\lambda (\x^p) f(\x^p) \right),
	\end{align*}		
	where we have defined $\x^{p,\pm} := \x^p \mp \Delta t\ \widehat{\theta^p}^\perp$.
\end{enumerate}

\subsubsection{Implementation of the approximate inversions}

\paragraph{Right-hand-side of \eqref{eq:fkrc}.}
We first rewrite the right-hand side of \eqref{eq:fkrc} (denote $w\equiv w^{(f)}$), so that the differentiation step only occurs on the final cartesian grid. 
\begin{align}
    - \widetilde{(X_\perp w^f_\psi)_k} &= \frac{e^{-\lambda}}{2\pi} \int_{\Sm^1} e^{-ik\theta} ( \hat\theta^\perp \cdot\nabla w_\psi - (\hat\theta\cdot\nabla\lambda) \partial_\theta w_\psi )\ d\theta \nonumber\\
    &= \frac{e^{-\lambda}}{2\pi} \int_{\Sm^1} (\hat\theta^\perp\cdot\nabla w_\psi e^{-ik\theta} + \partial_\theta \left( e^{-ik\theta} \hat\theta\cdot\nabla \lambda \right) w_\psi)\ d\theta \nonumber\\
    &= \frac{e^{-2\lambda}}{2\pi} \nabla\cdot \int_{\Sm^1} e^{-ik\theta} e^\lambda \hat\theta^\perp w_\psi\ d\theta - \frac{ik e^{-\lambda}}{2\pi} \nabla\lambda\cdot \int_{\Sm^1} e^{-ik\theta} \hat\theta w_\psi\ d\theta. \label{eq:lastRHS}
\end{align}
At each point of the domain, the computation of \eqref{eq:lastRHS} consists of the following steps:
\begin{enumerate}
    \item Compute $w^{(f)} = (H_{(k)} I_k f)_{-\sigma_k}$. First extend the data to the range $\alpha\in[-\pi,\pi]$ by oddness or evenness depending on $\sigma_k$. Then compute the fiberwise shifted Hilbert transform $H_{(k)}$: this is done for each $\beta$-slice separately via Fast Fourier Transform. Finally, restrict it back to $\alpha\in [-\frac{\pi}{2},\frac{\pi}{2}]$.
    \item For each gridpoint, compute the integrals $u(\x) := \int_{\Sm^1} w^{(f)}_\psi(\x,\theta) e^{-ik\theta} \cos\theta\ d\theta$ and $v(\x) := \int_{\Sm^1} w^{(f)}_\psi(\x,\theta) e^{-ik\theta} \sin\theta \ d\theta$, where each access $w_\psi(\x,\theta)$ requires computing the basepoint $\alpha\circ\psi(\x,\theta)$ by following backwards the geodesic with initial conditions $(\x,\theta)$. 
    \item Compute $\frac{e^{-2\lambda}}{2\pi} (- \partial_x (e^\lambda v) + \partial_y (e^\lambda u)) - \frac{ik e^{-\lambda}}{2\pi} (u\partial_x \lambda + v\partial_y\lambda)$ at each point of the reconstruction grid using centered finite differences and pointwise multiplications. 
\end{enumerate}

\paragraph{Right-hand-side of \eqref{eq:hkrc}.}

Computing the right-hand-side of \eqref{eq:hkrc} requires fewer steps than the previous one:
\begin{enumerate}
    \item Compute $w^{(h)} = (H_{(k)} I_{k,\perp} f)_{\sigma_k}$. First extend the data to the range $\alpha\in[-\pi,\pi]$ by oddness or evenness depending on $\sigma_k$. Then compute the fiberwise Hilbert transform $H_{(k)}$, done as above and restrict the result back to $\alpha\in [-\frac{\pi}{2},\frac{\pi}{2}]$.
    \item For each gridpoint, compute $f(\x) = \frac{1}{2\pi} \int_{\Sm^1} w^{(h)}_\psi(\x,\theta) e^{-ik\theta}\ d\theta$. Again, each access $w^{(h)}_\psi(\x,\theta)$ requires computing the basepoint $\alpha\ \circ\ \psi(\x,\theta)$ by following backwards the geodesic with initial conditions $(\x,\theta)$. 
\end{enumerate}

\subsection{Numerical resuts}
\subsubsection{Metrics with constant curvature} \label{ssec:constcurv}

For $R > 1$, the isotropic metric
\begin{align}
    g_{R,\pm}(x,y) := \frac{4R^4}{(x^2 + y^2 \pm R^2)^2}, \qquad x^2 + y^2 \le 1,
    \label{eq:constCurvMetric}
\end{align}
has constant Gaussian curvature $\kappa = \pm \frac{1}{R^2}$ throughout the unit disc. As $R\to \infty$, both models of constant positive and negative curvature converge poinwise to the Euclidean metric. Some examples of geodesics for each case are given Fig. \ref{fig:CCgeo}.

\begin{figure}[htpb]
    \centering
    \includegraphics[trim = 26 40 28 40, clip, width=0.13\textwidth]{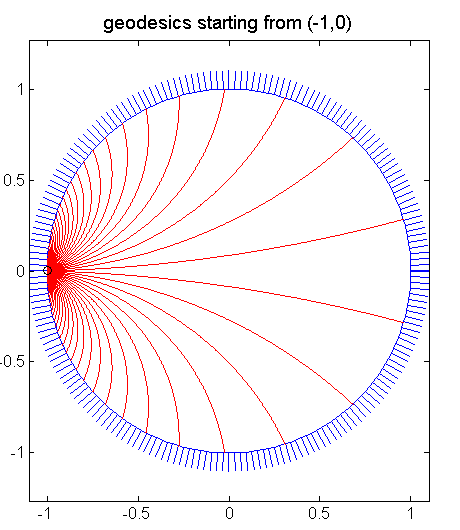}
    \includegraphics[trim = 26 40 28 40, clip, width=0.13\textwidth]{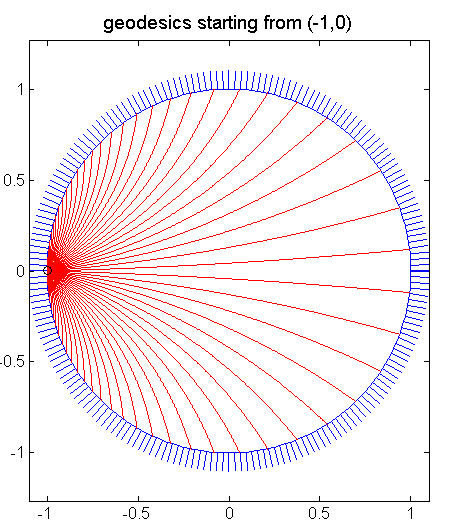}
    \includegraphics[trim = 26 40 28 40, clip, width=0.13\textwidth]{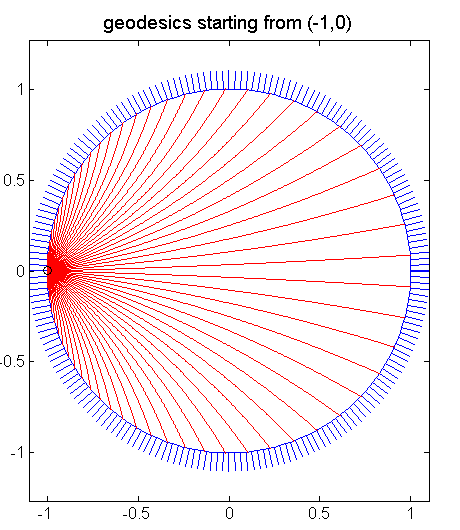}
    \includegraphics[trim = 26 40 28 40, clip, width=0.13\textwidth]{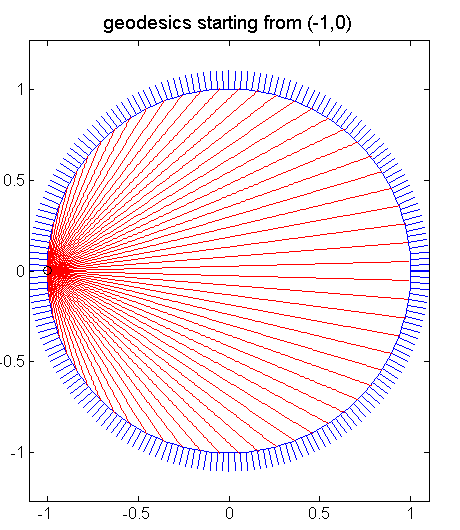}
    \includegraphics[trim = 26 40 28 40, clip, width=0.13\textwidth]{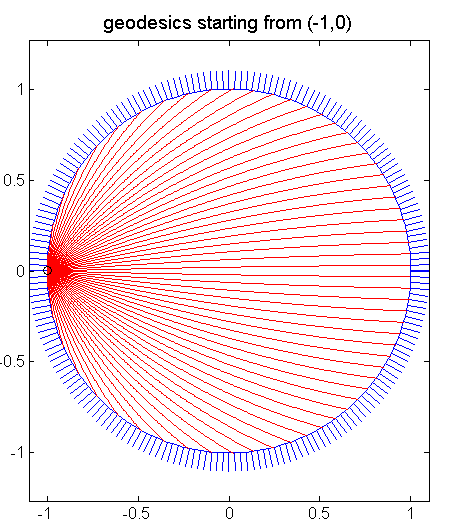}
    \includegraphics[trim = 26 40 28 40, clip, width=0.13\textwidth]{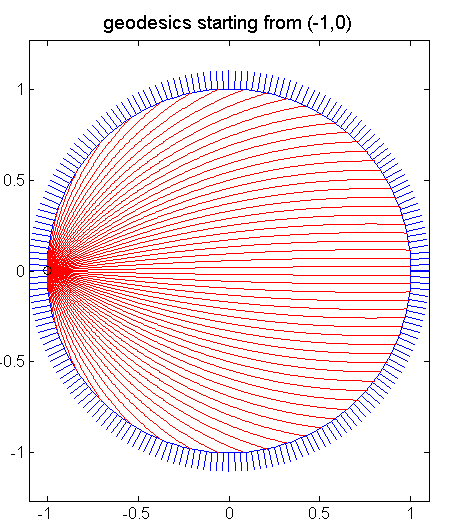}
    \includegraphics[trim = 26 40 28 40, clip, width=0.13\textwidth]{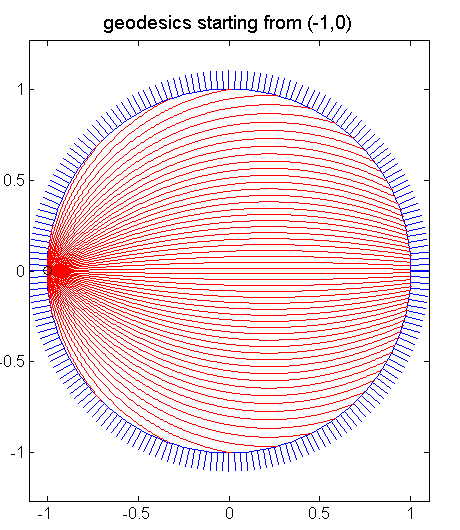}
    \caption{Some geodesics for the constant curvature cases. Left to right is arranged by increasing order of constant curvature $\kappa \in \{-0.69, -0.39, -0.25, 0.0, 0.25, 0.39, 0.69\}$, the middle one being Euclidean.}
    \label{fig:CCgeo}
\end{figure}

\begin{description}
    \item[Experiment 1: Constant positive curvature (CPC).]  For values of order $k\in \{3,6,10\}$ of the differential, and parameter values $R \in \{1.2, 1.6, 2.0\}$ for the metric $g_{R,+}$ corresponding to Gaussian curvatures $\kappa \in \{0.60, 0.39, 0.25\}$, we present the outcome of $10$ iterations of the series \eqref{eq:fhrc}. Note that $\kappa$ must be strictly less than $1$ (i.e. $R>1$), since in this limit case, the boundary is totally geodesic, which violates the strict convexity condition. The set of values chosen for $k$ includes at least one even and one odd value to check the validity of the reconstruction algorithm in each case. Relative $L^2$ errors after 10 iterations are summarized in Table \ref{tab:1}. 
    \item[Experiment 2: Constant negative curvature (CNC).] We repeat the same experiment with the same values of $m \in \{3, 6, 10\}$ and $R\in \{1.2, 1.6, 2.0\}$ as in Experiment 1, this time using the metric $g_{R,-}$ with corresponding Gaussian curvatures $\kappa \in \{-0.60, -0.39, -0.25\}$. Note that $\kappa$ must be strictly greater than $-1$ (i.e. $R>1$), since in this limit case, the manifold becomes the entire Poincar\'e disk and has no boundary. Relative $L^2$ errors after 10 iterations are summarized in Table \ref{tab:1}.  
\end{description}

\begin{figure}[htpb]
    \centering
    \subfigure[$I_3 f$ (mod/arg), metric $g_{R,+}$ with $R = 2.0$]{
    \includegraphics[trim = 10 20 40 30, clip, width=0.48\textwidth]{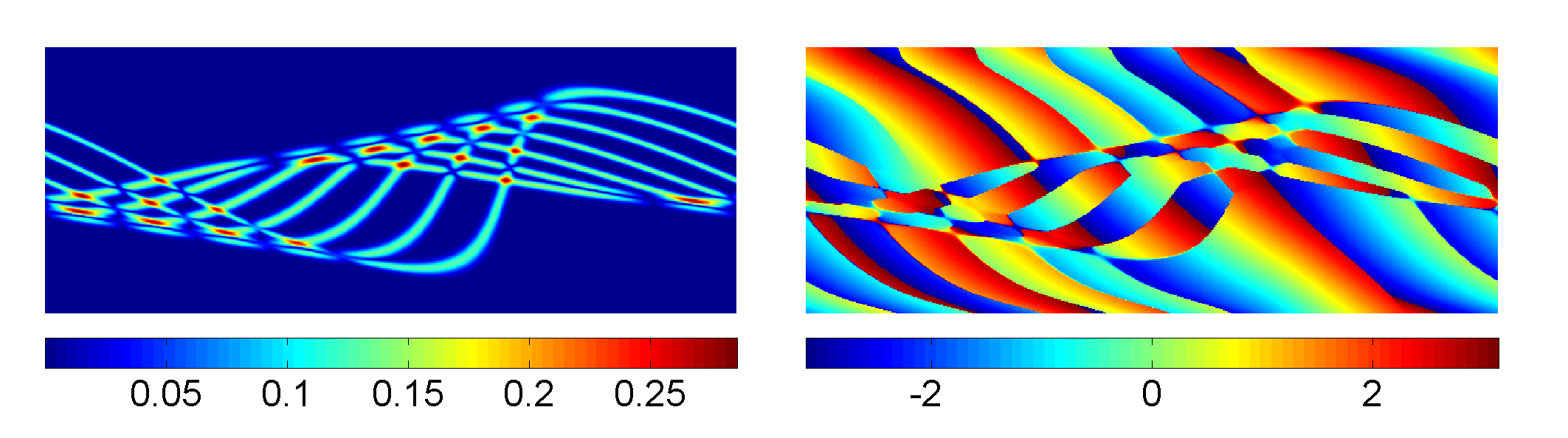}
    \label{fig:CPC_I3f_R2}
    }
    \subfigure[$I_3 f$ (mod/arg), metric $g_{R,-}$ with $R = 2.0$]{
    \includegraphics[trim = 10 20 40 30, clip, width=0.48\textwidth]{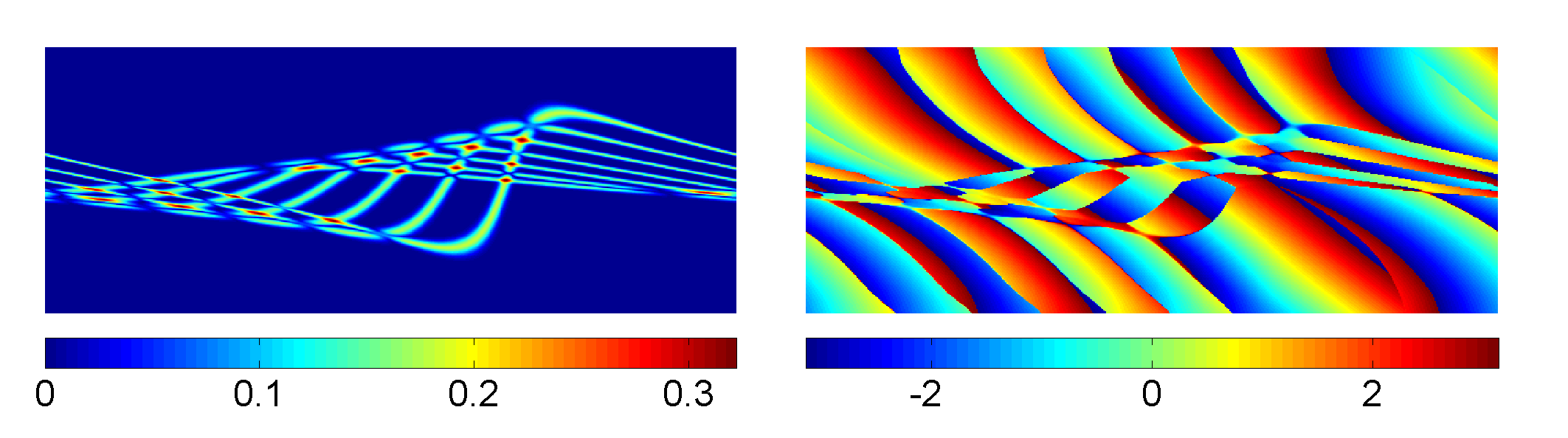}
    \label{fig:CNC_I3f_R2}
    }
    \subfigure[$I_6 f$ (mod/arg), metric $g_{R,+}$ with $R = 2.0$]{
    \includegraphics[trim = 10 20 40 30, clip, width=0.48\textwidth]{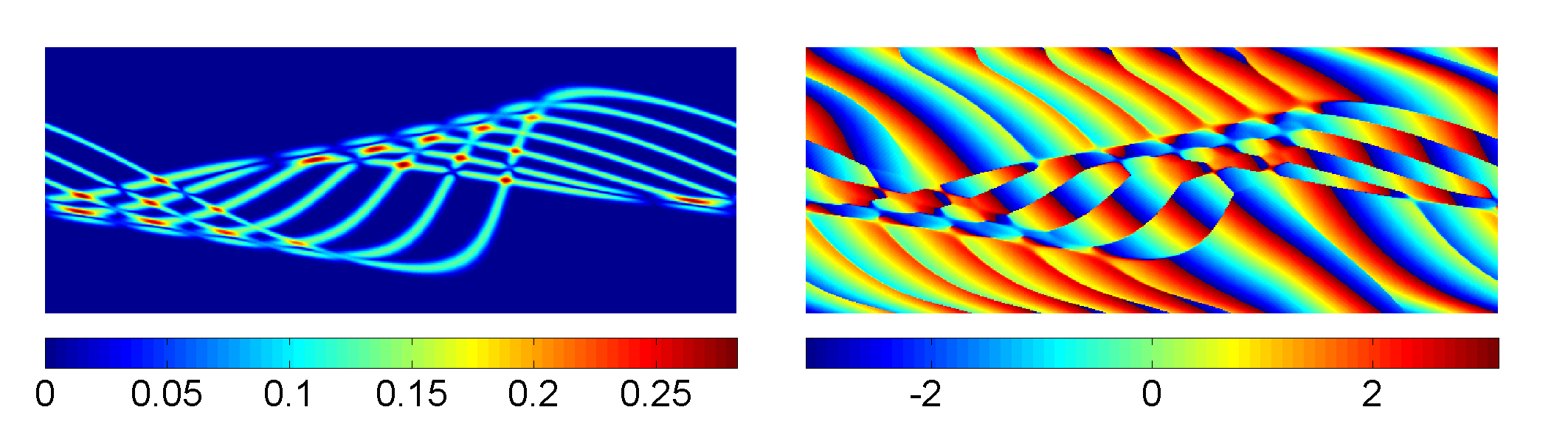}
    \label{fig:CPC_I6f_R2}
    }
    \subfigure[$I_6 f$ (mod/arg), metric $g_{R,-}$ with $R = 2.0$]{
    \includegraphics[trim = 10 20 40 30, clip, width=0.48\textwidth]{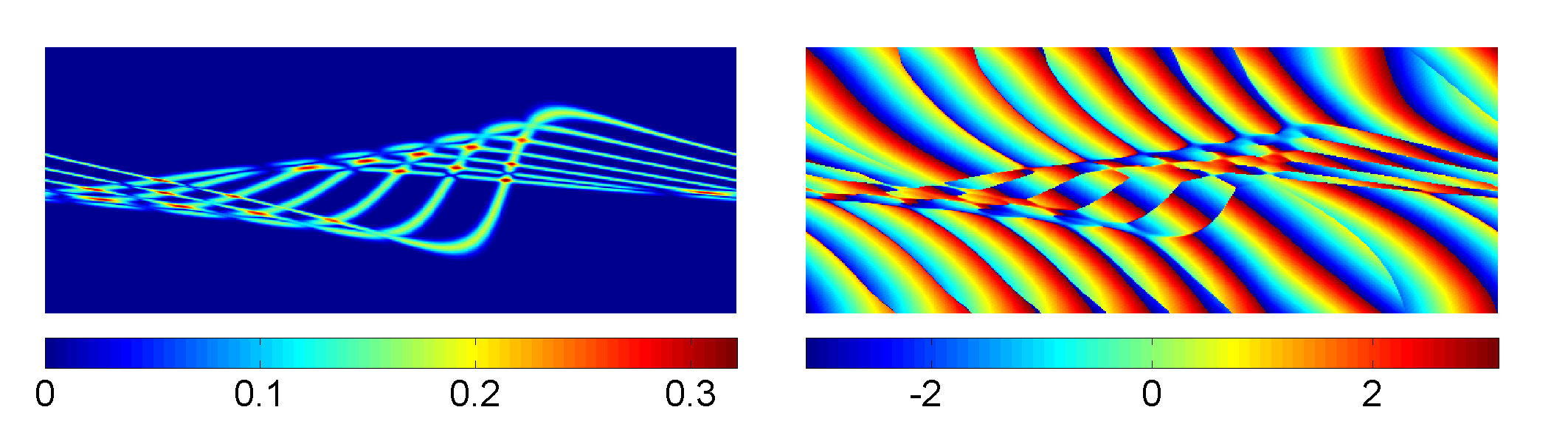}
    \label{fig:CNC_I6f_R2}
    }    
    \subfigure[$I_6 f$ (mod/arg), metric $g_{R,+}$ with $R = 1.2$]{
    \includegraphics[trim = 10 20 40 30, clip, width=0.48\textwidth]{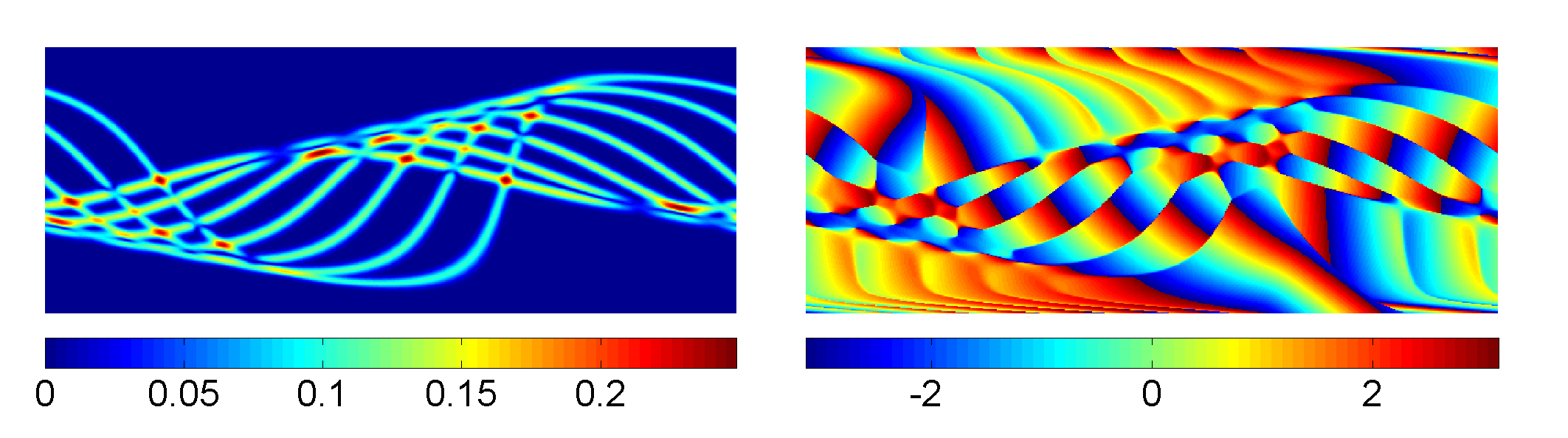}
    \label{fig:CPC_I6f_R1p2}
    }
    \subfigure[$I_6 f$ (mod/arg), metric $g_{R,-}$ with $R = 1.2$]{
    \includegraphics[trim = 10 20 40 30, clip, width=0.48\textwidth]{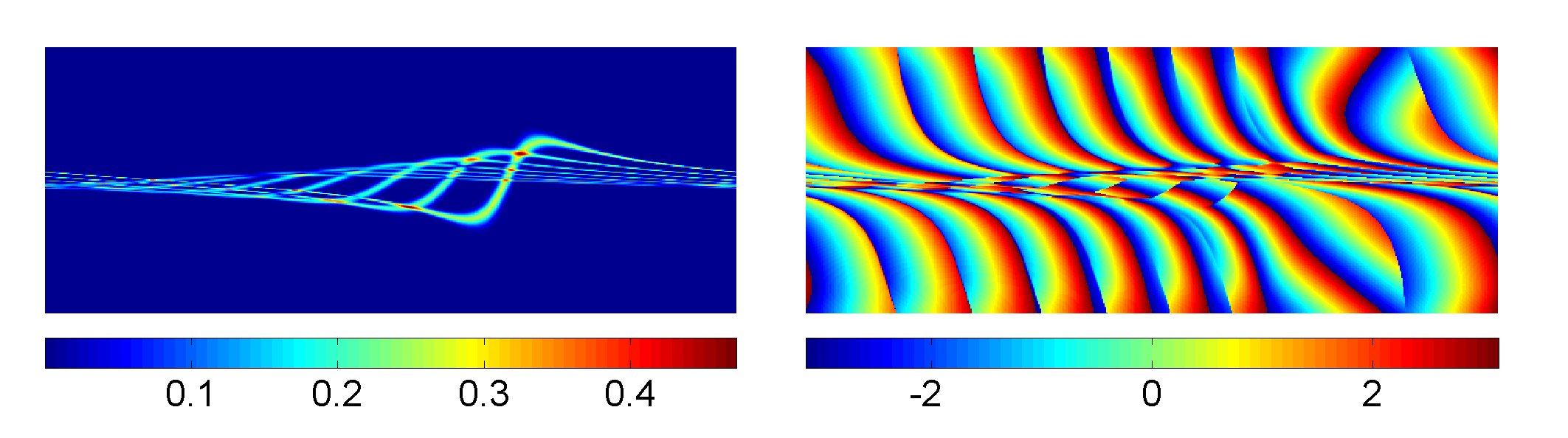}
    \label{fig:CNC_I6f_R1p2}
    }
    \caption{Examples of forward data $I_k f$ for various values of tensor order $k$ and metrics with constant curvature. The data is complex-valued and each plot contains modulus and argument. Axes are $(\beta,\alpha)\in [0,2\pi]\times [-\frac{\pi}{2}, \frac{\pi}{2}]$.}
    \label{fig:CCIf}
\end{figure}

\begin{table}
    \centering
    \begin{tabular}[htpb]{|c||c|c|c|}
	\hline
	CPC & $\kappa = 0.25$ & $\kappa = 0.39$ & $\kappa = 0.69$ \\ 
	\hline
	\hline
	$k = 3$ & 1.0 & 1.0  & 2.8 \\
	\hline
	$k = 6$ & 1.3 & 2.7 & 17.5 (NC) \\
	\hline
	$k = 10$ & 2.8 & 5.5 (NC) & 35.2 (NC) \\
	\hline
    \end{tabular}
    \quad     
    \begin{tabular}[htpb]{|c||c|c|c|}
	\hline
	CNC & $\kappa = -0.25$ & $\kappa = -0.39$ & $\kappa = -0.69$ \\ 
	\hline
	\hline
	$k = 3$ & 3.0 & 7.4 & 38.4 (DV) \\
	\hline
	$k = 6$ & 3.0  & 8.0 & 59 (DV) \\
	\hline
	$k = 10$ & 4.2 & 10 & 180 (DV) \\
	\hline
    \end{tabular}
    \caption{$L^2$ relative errors (in \%) after $10$ iterations for cases of constant curvature (left: positive, right:negative). $k$: tensor order. $\kappa$: curvature. (NC) indicates that the series had not reached convergence yet was still converging. (DV) indicates divergence. }
    \label{tab:1}
\end{table}

\begin{figure}[htpb]
    \centering
    \subfigure[CPC: relative $L^2$ error convergence plots, $k\in \{3,6,10\}$, $R\in \{1.2,1.6,2.0\}$]{
    \includegraphics[trim = 5 0 15 25, clip, height=0.32\textheight]{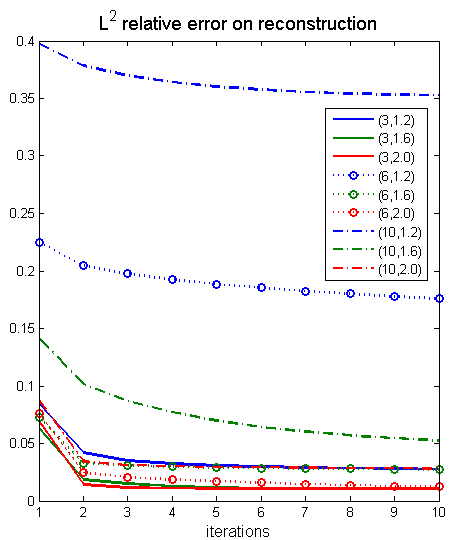}
    \label{fig:convplot1}
    }
    \subfigure[CNC: relative $L^2$ error convergence plots, $k\in \{3,6,10\}$, $R\in \{1.2,1.6,2.0\}$]{
    \includegraphics[trim = 5 0 15 25, clip, height=0.32\textheight]{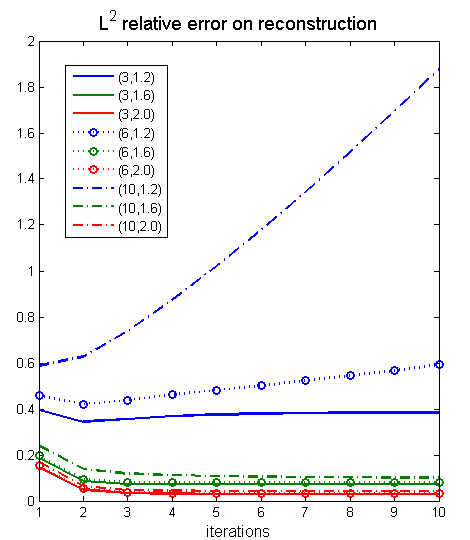}
    \label{fig:convplot2}
    }
    \subfigure[$k=10$, $\kappa = 0.69$]{
    \includegraphics[trim = 10 10 30 30, clip, width=0.4\textwidth]{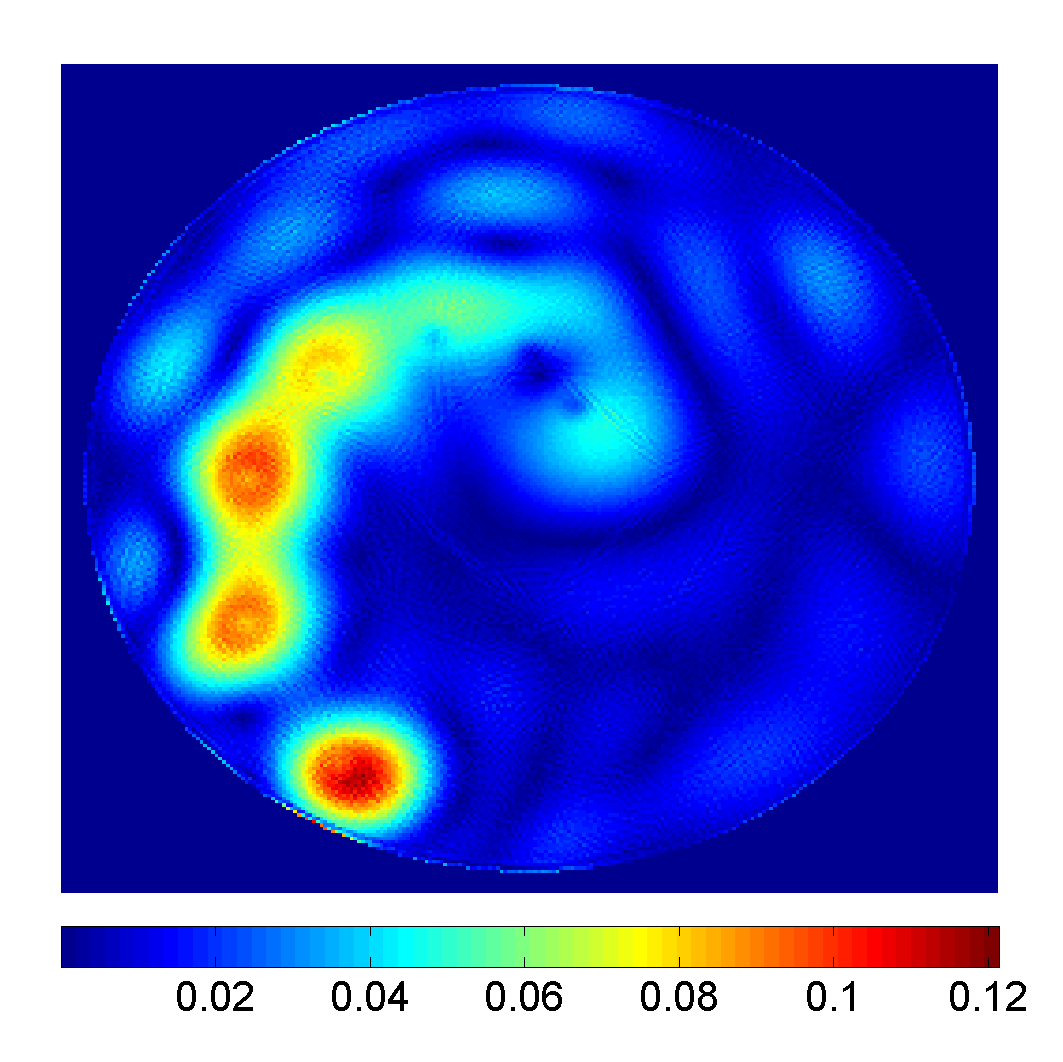}
    \label{fig:CPC_artifact}
    }
    \subfigure[$k=10$, $\kappa = -0.69$]{
    \includegraphics[trim = 10 10 30 30, clip, width=0.4\textwidth]{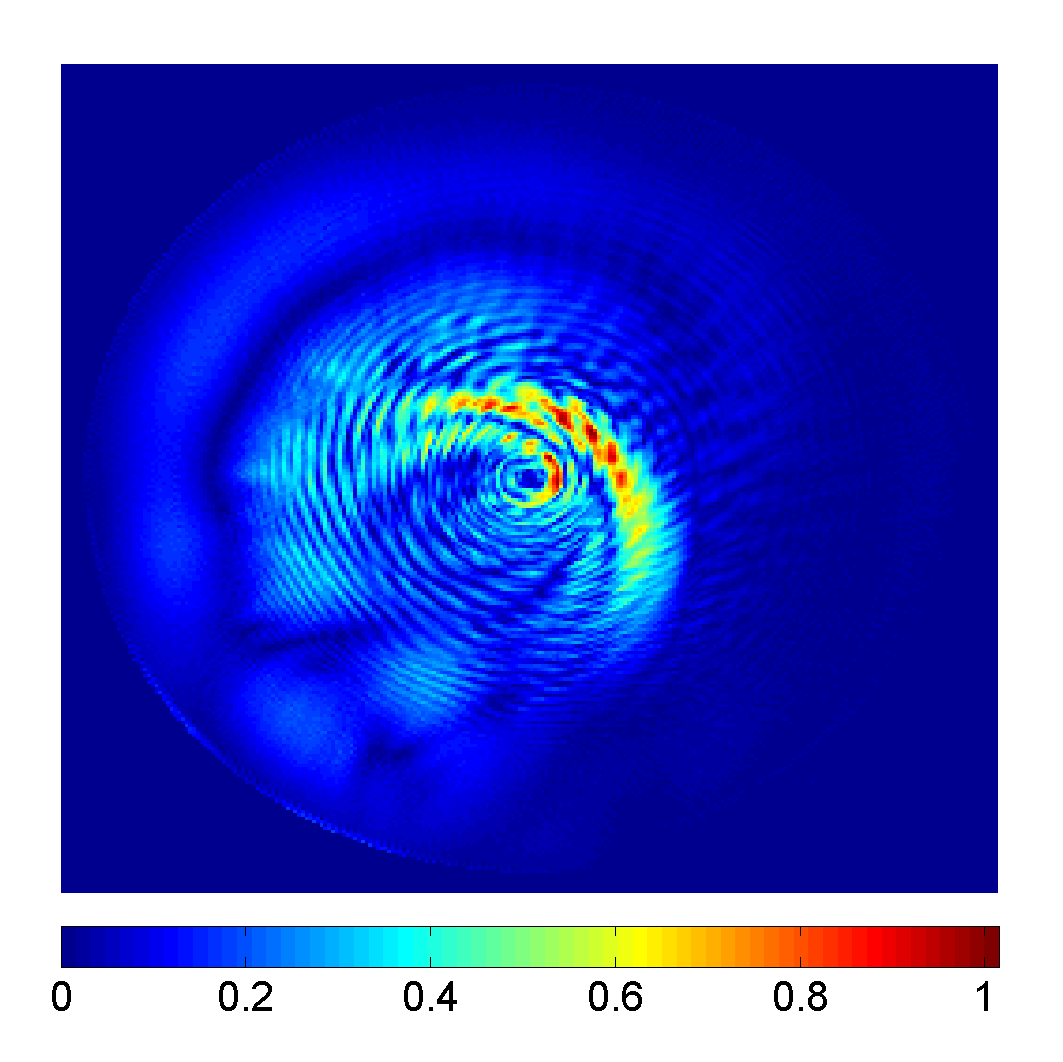}
    \label{fig:CNC_artifact}
    }
    \caption{\subref{fig:convplot1}\&\subref{fig:convplot2}: convergence plot for various values of the couple $(k,R)$. \subref{fig:CPC_artifact}\&\subref{fig:CNC_artifact}: pointwise errors $|f-f_{rc}|$ in cases where the (constant) curvature $\kappa$ and the tensor order $k$ are too high and the series fails to converge. In the positive curvature case (left), a smooth artifact is appearing. In the negative curvature case (right), the errors at gridscale suggest that geodesics do not sample the center of the domain well enough.}
    \label{fig:errorsat10}
\end{figure}

\clearpage
\subsubsection{A one-parameter family of non-constant curvature metrics} \label{ssec:oneparam}

In order to illustrate that the reconstruction algorithms also work for metrics with non-constant curvature, we now use the following one-parameter family of metrics first presented in \cite{Monard2013}
\begin{align}
    g_\ell(x,y) = \exp\left( \ell \exp\left( - ((x-.2)^2 + y^2) /2 \sigma^2  \right) \right), \quad x^2 + y^2 \le 1, \quad \sigma = 0.25.
    \label{eq:lensmetric}
\end{align}
As $\ell$ increases from $0$ to $e$, the metric transitions continuously from Euclidean to simple, to non-simple non-trapping, to trapping, see Fig. \ref{fig:lensmetric}.

\begin{figure}[htpb]
    \centering
    \includegraphics[trim = 26 40 27 40, clip, height=0.115\textheight]{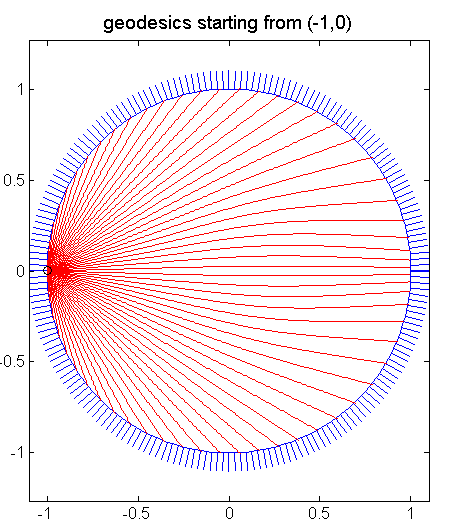} \qquad\qquad
    \includegraphics[trim = 26 40 27 40, clip, height=0.115\textheight]{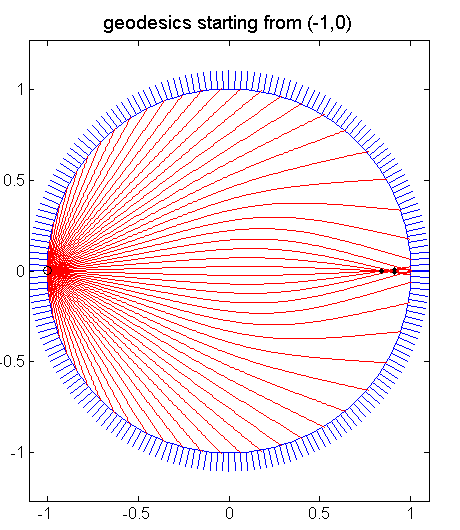} \qquad\qquad
    \includegraphics[trim = 26 40 27 40, clip, height=0.115\textheight]{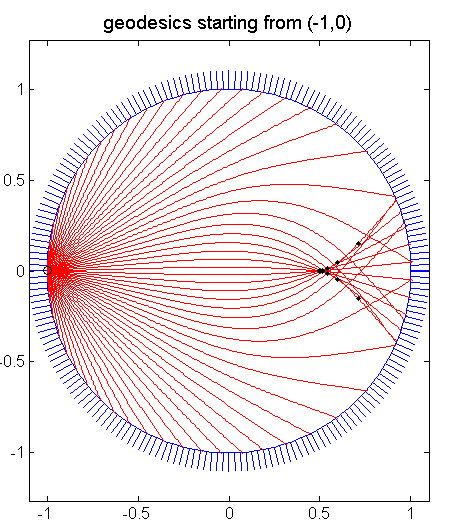}
    \caption{Examples of geodesics for the metric $g_\ell$ \eqref{eq:lensmetric} with $\ell \in \{0.3, 0.6, 1.2\}$ (l. to r.).}
    \label{fig:lensmetric}
\end{figure}

\begin{description}
    \item[Experiment 3: Inversion of $I_3$.] Using the differential order $k=3$ as an example, we run $10$ iterations of the series \eqref{eq:fhrc} reconstructing $f$ from the transform $I_3 (f) = I[f(x) e^{3i\theta}]$ with metric $g_\ell$ for $\ell$ taking the values $0.3$ (simple), $0.6$ (``close to'' simple yet non-simple) and $1.2$ (non-simple). For each value of $\ell$, the forward data $I_3 f$ appears on Fig. \ref{fig:lensIf}\subref{fig:lens_l03}\subref{fig:lens_l06}\subref{fig:lens_l12}.
    \item[Experiment 4: Inversion of $I_{3,\perp}$.] Using the same parameter values $(k,\ell) \in \{3\}\times (0.3,0.6,1.2)$ as in Experiment 3, we now implement the reconstruction algorithm based on formula \eqref{eq:hhrc}, i.e. we reconstruct the function $f(x)$ from knowledge of $I_{3,\perp} f = I [X_\perp (f(x) e^{3i\theta})]$, where $f(x) e^{3i\theta}\in \Omega_3$ and $X_\perp (f(x) e^{3i\theta}) \in \Omega_2 \oplus \Omega_4$. For each value of $\ell$, the forward data $I_{3,\perp} f$ appears on Fig. \ref{fig:lensIf}\subref{fig:duallens_l03}\subref{fig:duallens_l06}\subref{fig:duallens_l12}.
\end{description}

\noindent A convergence plot for both experiments 3\&4 is provided Fig. \ref{fig:lens_artifacts}\subref{fig:convplotlens} and Table \ref{tab:2} summarizes the relative $L^2$ errors after 10 iterations of each series. 

\begin{figure}[htpb]
    \centering
    \subfigure[$I_3 f$ (mod/arg), metric $g_{\ell}$ with $\ell = 0.3$]{
    \includegraphics[trim = 10 20 40 30, clip, width=0.48\textwidth]{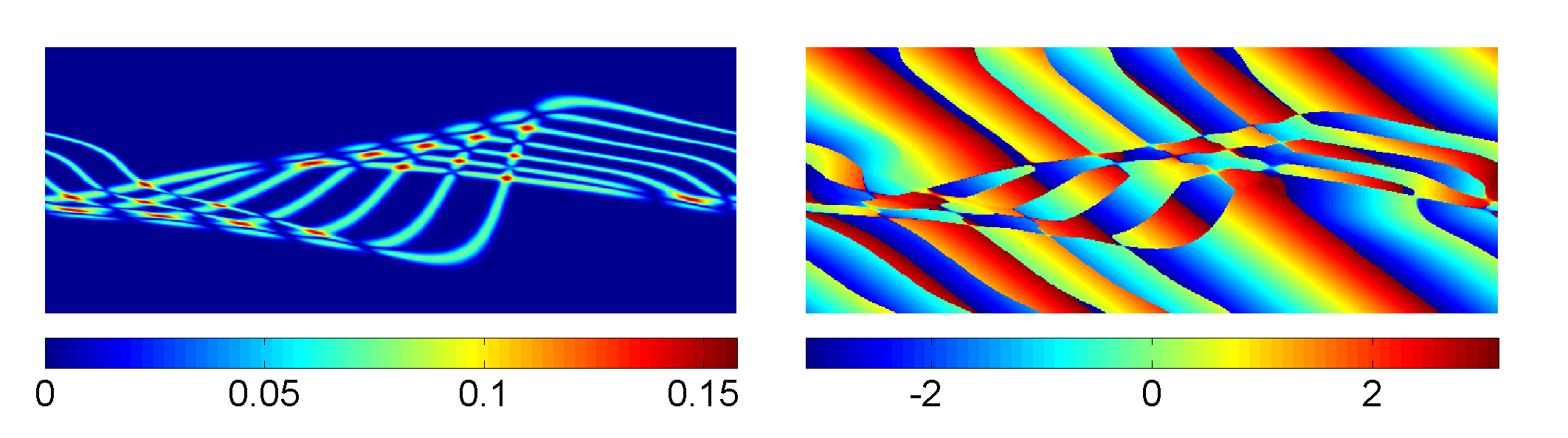}
    \label{fig:lens_l03}
    }
    \subfigure[$I_{3_\perp} f$ (mod/arg), metric $g_{\ell}$ with $\ell = 0.3$]{
    \includegraphics[trim = 10 20 40 30, clip, width=0.48\textwidth]{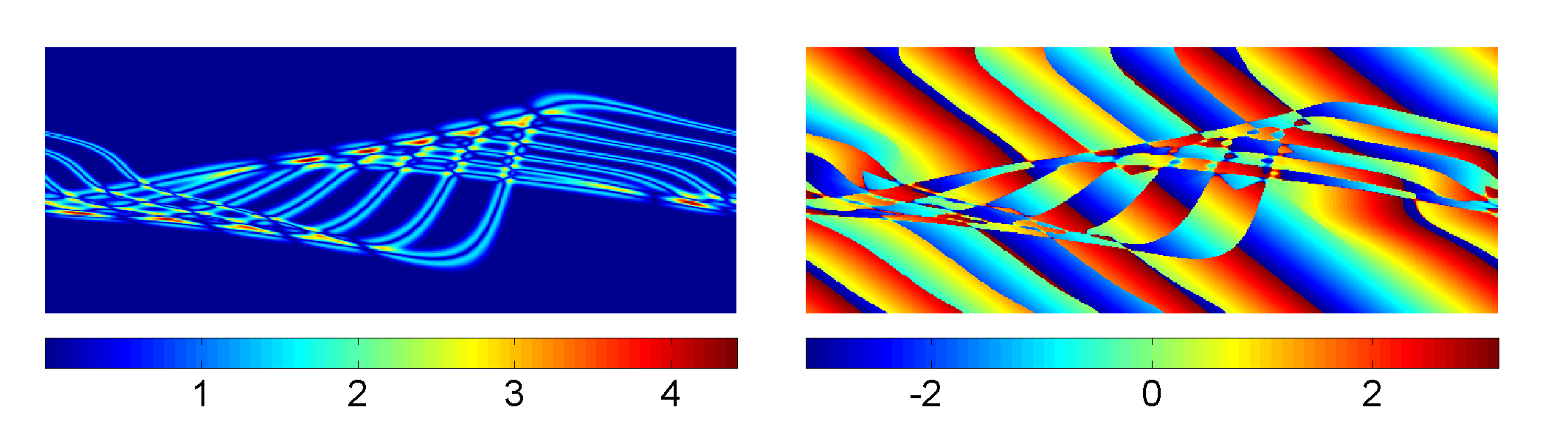}
    \label{fig:duallens_l03}
    }
    \subfigure[$I_3 f$ (mod/arg), metric $g_{\ell}$ with $\ell = 0.6$]{
    \includegraphics[trim = 10 20 40 30, clip, width=0.48\textwidth]{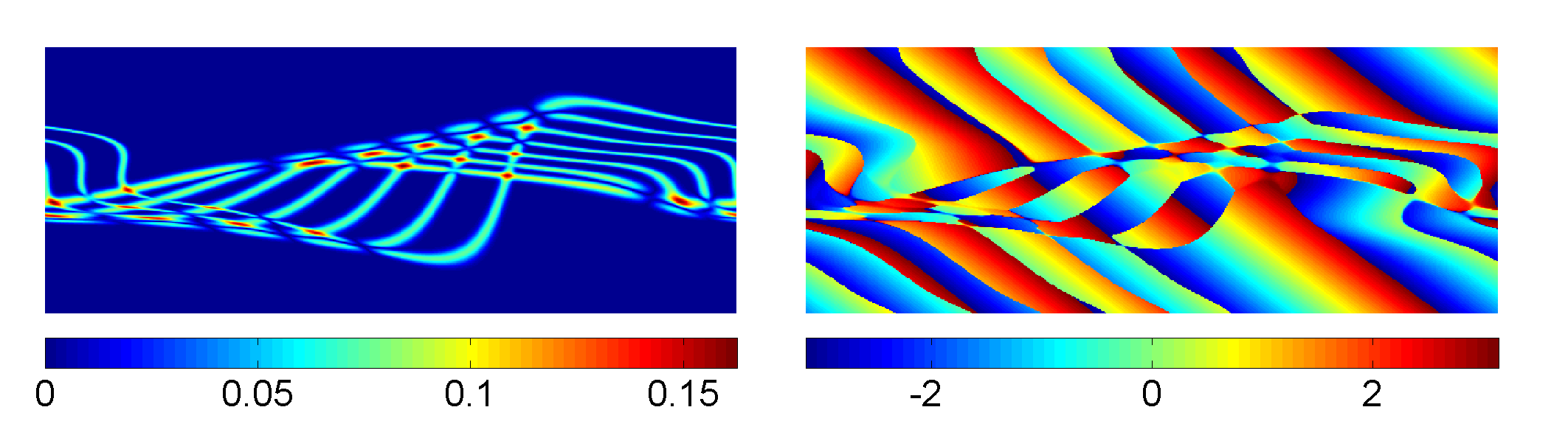}
    \label{fig:lens_l06}
    }
    \subfigure[$I_{3,\perp} f$ (mod/arg), metric $g_{\ell}$ with $\ell = 0.6$]{
    \includegraphics[trim = 10 20 40 30, clip, width=0.48\textwidth]{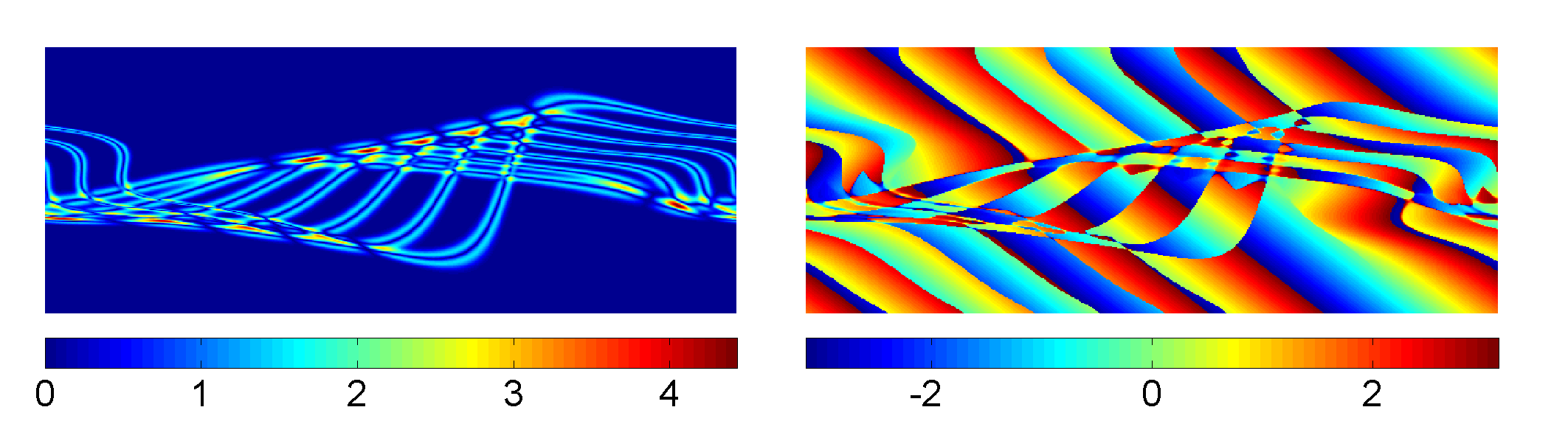}
    \label{fig:duallens_l06}
    }    
    \subfigure[$I_3 f$ (mod/arg), metric $g_{\ell}$ with $\ell = 1.2$]{
    \includegraphics[trim = 10 20 40 30, clip, width=0.48\textwidth]{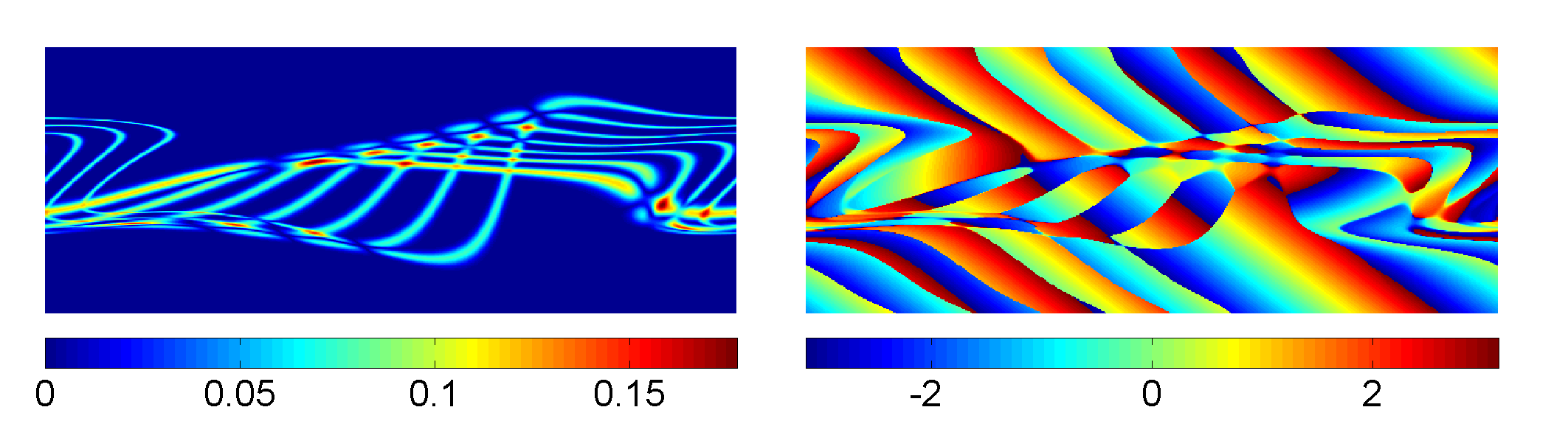}
    \label{fig:lens_l12}
    }
    \subfigure[$I_{3,\perp} f$ (mod/arg), metric $g_{\ell}$ with $\ell = 1.2$]{
    \includegraphics[trim = 10 20 40 30, clip, width=0.48\textwidth]{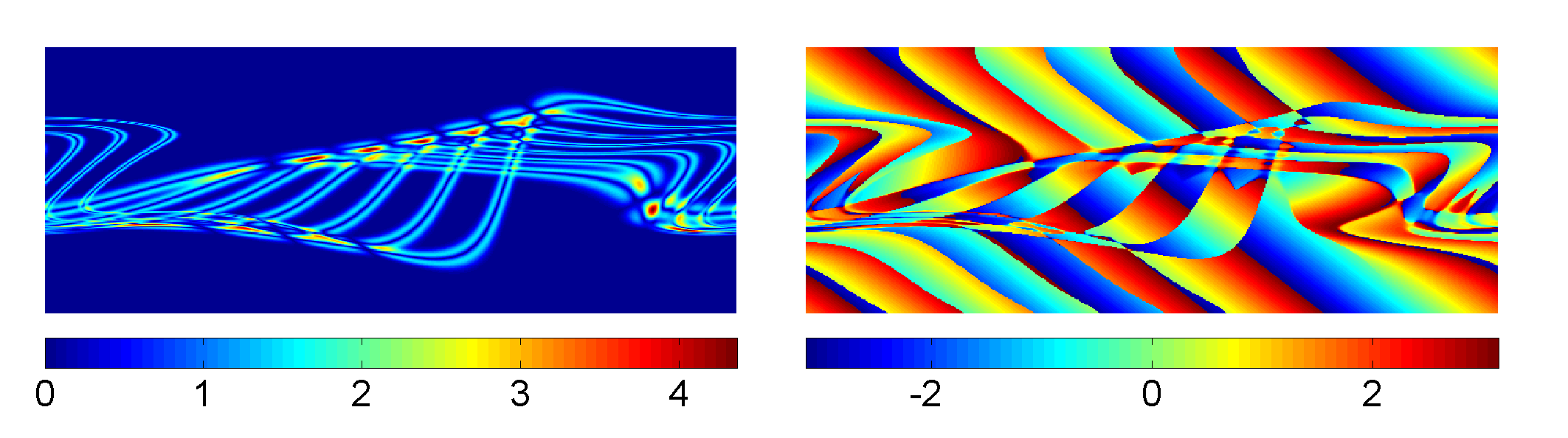}
    \label{fig:duallens_l12}
    }
    \caption{Examples of forward data $I_3 f$ and $I_{3,\perp}f$ for different metrics $g_\ell$. The data is complex-valued and each plot contains modulus and argument. Axes are $(\beta,\alpha)\in [0,2\pi]\times [-\frac{\pi}{2}, \frac{\pi}{2}]$.}
    \label{fig:lensIf}
\end{figure}

\begin{table}[htpb]
    \centering
    \begin{tabular}{|c||c|c|c|c|}
	\hline
	& $\ell = 0.3$ & $\ell = 0.6$ & $\ell = 0.9$ & $\ell = 1.2$ \\
	\hline
	\hline
	inversion of $I_3$ & 2.0 & 3.0 & 25 & 43.8 (DV) \\
	\hline
	inversion of $I_{3,\perp}$ & 1.3 & 1.8 & 22.7 (NC) & 38.7 (DV) \\
	\hline
    \end{tabular}
    \caption{$L^2$ relative error (in \%) at 10 iterations for Experiments 3 and 4. (NC) indicates that the series had not reached convergence yet was still converging. (DV) indicates divergence.}
    \label{tab:2}
\end{table}

\begin{figure}[htpb]
    \centering
    \subfigure[Inversion of $I_3$]{    
    \includegraphics[trim = 10 20 40 30, clip, height=0.30\textwidth]{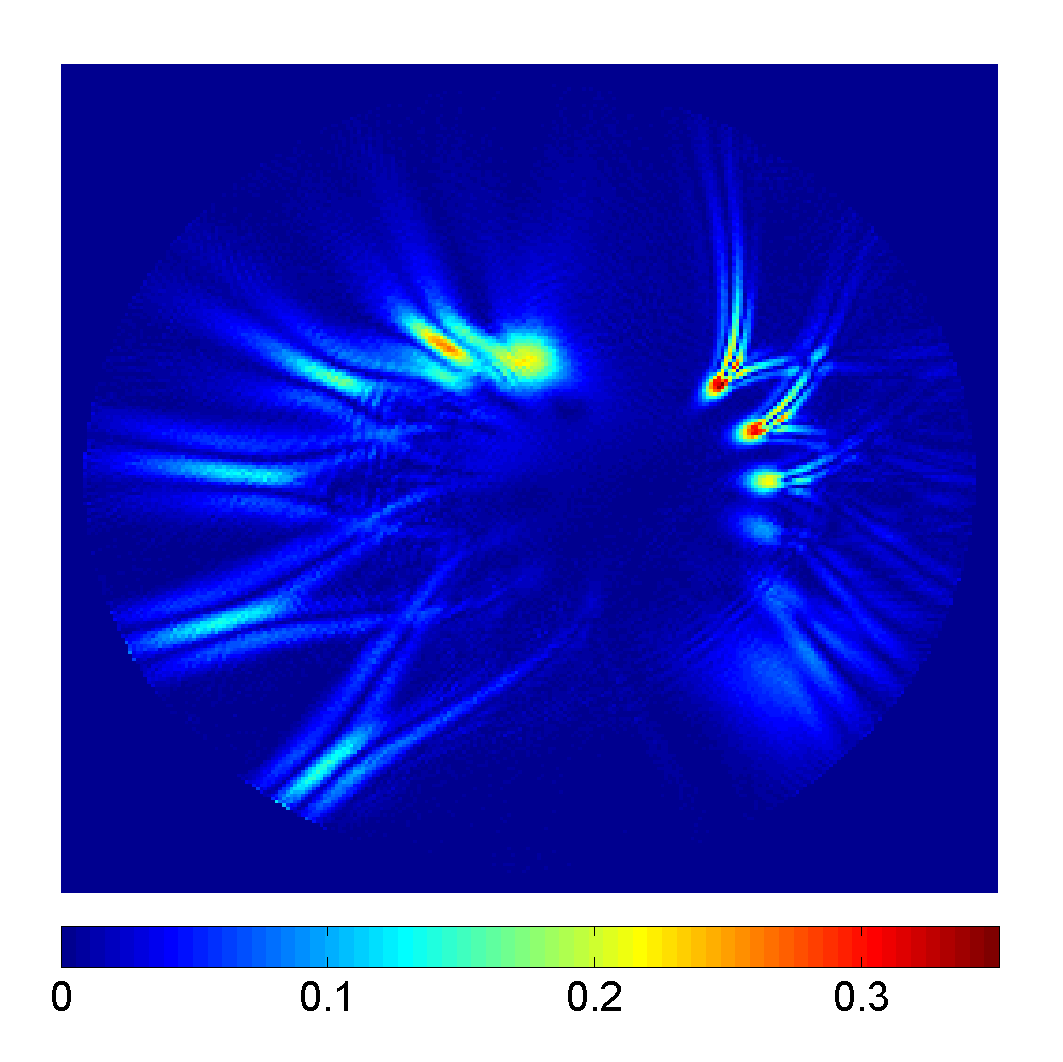}
    \label{fig:lens_artifact}
    }
    \subfigure[Inversion of $I_{3,\perp}$]{        
    \includegraphics[trim = 10 20 35 30, clip, height=0.30\textwidth]{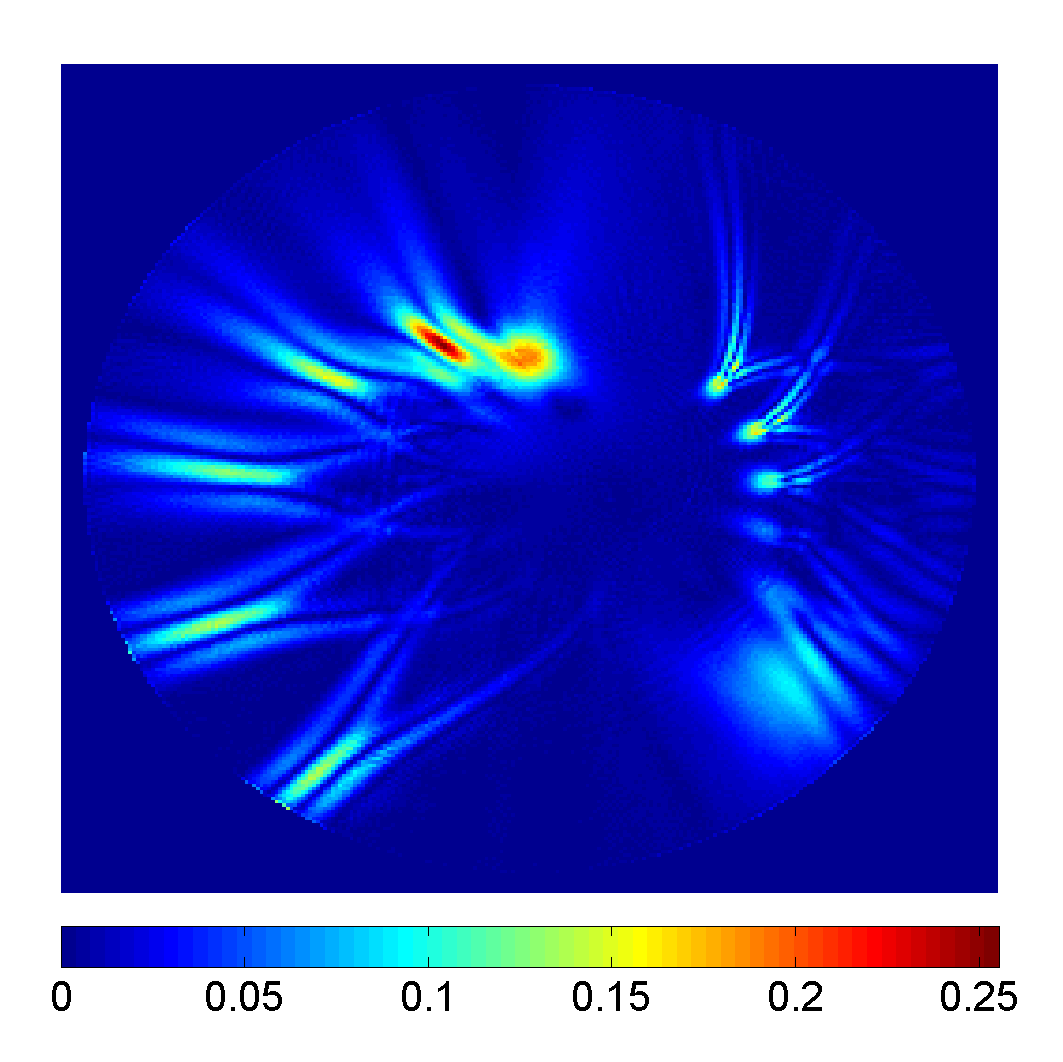}
    \label{fig:duallens_artifact}
    }
    \subfigure[Convergence plots]{        
    \includegraphics[trim = 5 0 15 25, clip, height=0.36\textwidth]{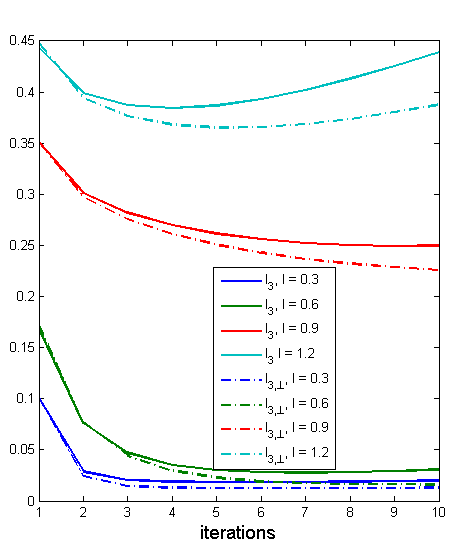}
    \label{fig:convplotlens}
    }    
    \caption{\subref{fig:lens_artifact}\&\subref{fig:duallens_artifact}: Pointwise error $|f-f_{rc}|$ at 10 iterations when the series fail to converge (case $\ell = 1.2$). \subref{fig:convplotlens}: $L^2$ convergence plots for experiments 3 and 4.}
    \label{fig:lens_artifacts}
\end{figure}

\clearpage

\subsection{Concluding comments on the numerical experiments}

In light of Experiments 1 and 2, a few remarks are in order: 
\begin{itemize}
    \item[$(i)$] As was observed previously in \cite{Monard2013}, negative curvature tends to scarcify geodesics at the center of the disk, which results in an undersampling of this area. This may be observed on Fig. \ref{fig:CCgeo}, where each plot contains the same number of geodesic curves, and for higher negative curvature, very few actually go near the center. This issue may be solved by choosing a discretization of the influx boundary $\partial_+ SM$ that is adapted to the ambient metric, though how to solve this issue in a way that is both theoretically sound and numerically practical will be investigated and presented in future work. Let us mention that this explains why the data in Fig. \ref{fig:lensIf}\subref{fig:CNC_I6f_R1p2} is narrowly supported near the axis $\alpha = 0$, and also partly explains the high-frequency undersampling artifacts observed on Fig. \ref{fig:errorsat10}\subref{fig:CNC_artifact}.

    \item[$(ii)$] Table \ref{tab:1} suggests that convergence of the series is successful when both the tensor order $k$ and curvature $\kappa$ are small enough, and that this convergence degrades as one increases either one of these parameters. This is in good agreement with estimate \eqref{eq:Wkest} in Proposition \ref{prop:Wkest}, where the estimate on the norm of the error operator degrades as either $k$ or $\kappa$ increases. In these cases, it is not expected that the operator $W_k$ remains a contraction, and the Neumann series needs not converge, leaving the following questions open:
	\begin{enumerate}
	    \item Is the operator $I + W_k^2$ still invertible as $k$ and/or $\kappa$ increases ?
	    \item If it is, how to invert it ? In particular, how to account for some of the eigenvalues of $W_k^2$ which now have modulus greater than one ?
	\end{enumerate} 
\end{itemize}

\noindent In light of Experiments 3 and 4, we make the following comments:
\begin{itemize}
    \item[$(iii)$] Aside from numerical inaccuracies, the convergence plots on Fig. \ref{fig:lens_artifacts}\subref{fig:convplotlens} demonstrate the successful convergence of both Neumann series to the initial function in cases $\ell = 0.3$ and $0.6$. It should be noted that the case $\ell = 0.6$ is a non-simple metric, thereby leaving hopes for theoretical improvements in certain cases of non-simple metrics.

    \item[$(iv)$] Similarly to point $(ii)$ above, the open questions posed there remain also valid as the metric becomes no longer simple yet non-trapping. Indeed, Fig.\ref{fig:lens_artifacts}\subref{fig:convplotlens} clearly demonstrate the failure of both Neumann series inverting $I_3$ and $I_{3,\perp}$ to converge when $\ell = 1.2$. Moreover, one observes in Fig. \ref{fig:lens_artifacts}\subref{fig:lens_artifact}-\subref{fig:duallens_artifact} the creation of artifacts at the conjugate loci of each gaussian bump that belongs to the initial phantom. Such artifacts, already observed in \cite{Monard2013} in the case of functions and solenoidal vector fields (i.e. the case $k=0$ here), will be the object of future work. 
\end{itemize}

\section*{Acknowledgements} 
The author would like to thank his postdoctoral mentor Gunther Uhlmann for encouragement and support, as well as Gabriel Paternain and Sean Holman for helpful discussions. This research is partially supported by NSF grant DMS-1025372. The author thanks the referees for suggesting reference \cite{Svetov2013}.

\bibliographystyle{siam}

\end{document}